\documentclass[11pt]{amsart}

\usepackage[T1]{fontenc}
\usepackage[utf8]{inputenc}
\usepackage{lmodern}
\usepackage{microtype}
\usepackage[margin=1.12in]{geometry}
\usepackage{amsmath,amssymb,amsthm,mathtools,comment,float}
\usepackage{booktabs}
\usepackage{enumitem}
\usepackage[colorlinks=true,citecolor=blue,linkcolor=blue,urlcolor=blue]{hyperref}
\usepackage[nameinlink,capitalize]{cleveref}

\numberwithin{equation}{section}

\newtheorem{theorem}{Theorem}[section]
\newtheorem{proposition}[theorem]{Proposition}
\newtheorem{lemma}[theorem]{Lemma}
\newtheorem{corollary}[theorem]{Corollary}
\theoremstyle{definition}
\newtheorem{definition}[theorem]{Definition}
\theoremstyle{remark}
\newtheorem{remark}[theorem]{Remark}

\newcommand{\R}{\mathbb R}
\newcommand{\Pj}{\mathbb P}
\newcommand{\C}{\mathbb C}
\newcommand{\cE}{\mathcal E}
\newcommand{\cH}{\mathcal H}

\newcommand{\ddbar}{\sqrt{-1}\partial \bar{\partial}}
\newcommand{\Ent}{\operatorname{Ent}}
\newcommand{\Ric}{\operatorname{Ric}}

\newcommand{\Aut}{\operatorname{Aut}}

\title{Positive scalar curvature K\"ahler metrics and shifted
\(K\)-stability}
\author{Zehao Sha}
\hypersetup{
  pdftitle={Positive scalar curvature Kahler metrics and shifted K-stability},
  pdfauthor={Zehao Sha}
}

\begin{document}

\begin{abstract}
In this paper, we show that the quantitative data underlying \(K\)-stability, originally developed in the study of canonical metrics, also detect the existence of positive scalar curvature K\"ahler metrics in a fixed class.
\end{abstract}

\maketitle
%\tableofcontents

\section{Introduction}
\label{sec:introduction}

\subsection{Constant scalar curvature K\"ahler metrics and \(K\)-stability}
\label{subsec:intro-canonical-metrics}

Let \((X,L)\) be a polarized manifold. The Yau--Tian--Donaldson (YTD) conjecture asked whether the existence of a constant scalar curvature K\"ahler (cscK) metric in \(2\pi c_1(L)\) is linked to an algebro-geometric stability condition, formulated in terms of test configurations and known as \(K\)-stability \cite{Yau93openproblem,Tian1997,Donaldson2002}. Since then, significant progress has been made concerning the necessary direction: Donaldson related the Donaldson--Futaki invariant to lower bounds for the Calabi functional, while Ross--Thomas introduced computable slope obstructions \cite{Donaldson2005,RossThomas2006}. Further, Stoppa \cite{Stoppa2009} proved that a polarized cscK manifold with discrete automorphism group is \(K\)-stable, and Berman--Darvas--Lu subsequently proved that a polarized manifold admitting a cscK metric is \(K\)-polystable \cite{BermanDarvasLu2020}. 

The converse direction is subtler. From a variational perspective, cscK metrics are precisely the smooth critical points of the Mabuchi \(K\)-energy \cite{Mabuchi1986}. The geometry of the space of K\"ahler potentials singles out geodesics as the natural paths along which to study the Mabuchi functional. In \cite{Chen2000Space}, Chen constructed weak geodesic segments, and Berman--Berndtsson established the convexity of the Mabuchi functional along such segments \cite{BermanBerndtsson2017}. In \cite{Darvas2015}, Darvas identified the \(d_1\)-metric completion of the space of smooth K\"ahler potentials with the finite-energy space \(\mathcal E^1\), on which the Mabuchi functional admits a canonical lower semicontinuous geodesically convex extension \cite{BermanDarvasLu2017}. This framework ultimately led to the resolution of the automorphism-relative \(d_1\)-metric formulation of Tian's properness conjecture \cite{Tian2000canonicalmetrics}: a K\"ahler class contains a cscK metric if and only if the Mabuchi functional is \(d_1\)-coercive modulo the identity component of the automorphism group. Darvas--Rubinstein formulated this criterion using the quotient \(d_1\)-metric and reduced its sufficiency direction to the regularity of finite-energy minimizers \cite{DarvasRubinstein2017}. Berman--Darvas--Lu proved the regularity of finite-energy minimizers on cscK manifolds \cite{BermanDarvasLu2020}, while Chen--Cheng established the a priori estimates and existence of solutions for the cscK equation, assuming the boundedness of the entropy \cite{ChenCheng2021I,ChenCheng2021II}. Building on Donaldson's geodesic stability conjecture and earlier constructions of weak geodesic rays associated with test configurations \cite{PhongSturm2007}, Darvas--Lu further developed the metric geometry of finite-energy geodesic rays and established an essentially optimal form of the geodesic stability criterion, detecting coercivity through the radial behavior of the Mabuchi functional along such rays \cite{DarvasLu2020}.

The radial viewpoint naturally leads to non-Archimedean geometry. A test configuration determines both an algebraic degeneration of \((X,L)\) and an asymptotic direction in the space of K\"ahler potentials. Boucksom--Hisamoto--Jonsson made this correspondence intrinsic by interpreting test configurations as non-Archimedean metrics on \(L\) and introducing non-Archimedean analogues of the Mabuchi and Aubin \(J\)-functionals \cite{BoucksomHisamotoJonsson2017}. They further showed that uniform \(K\)-stability is equivalent to the coercivity of the non-Archimedean Mabuchi functional with respect to the non-Archimedean Aubin \(J\)-functional on the space of positive non-Archimedean metrics. In \cite{BoucksomHisamotoJonsson2019}, Boucksom--Hisamoto--Jonsson subsequently established slope formulas identifying these non-Archimedean functionals with the asymptotic slopes of their Archimedean counterparts along compatible subgeodesic rays arising from test configurations. In this way, non-Archimedean geometry places algebraic degenerations and the radial asymptotics of the Mabuchi functional within a common framework.

Two recent works established Yau--Tian--Donaldson correspondences for the cscK problem in distinguished perspectives. Darvas--Zhang introduced the quantized \(K^\beta\)-energies and formulated uniform \(K^\beta\)-stability through their radial slopes along all K\"ahler test configurations, which yields a transcendental criterion beyond the polarized setting \cite{DarvasZhang2025}. By contrast, Boucksom--Jonsson worked on polarized manifolds and formulated a complete correspondence on the full space of finite-energy non-Archimedean metrics, characterizing cscK metrics, and more generally weighted extremal metrics with prescribed symmetries, by suitable forms of \(\widehat K\)-polystability \cite{BoucksomJonsson2025YTD}.

\subsection{Positive scalar curvature K\"ahler metrics}
\label{subsec:intro-positive-scalar-curvature}

The study of positive scalar curvature (PSC) metrics is a central theme in Riemannian geometry. The surgery theorem of Schoen--Yau \cite{SchoenYau1979Structure} and Gromov--Lawson \cite{GromovLawson1980} states that the existence of a PSC metric is preserved under surgeries of
codimension at least three. This mechanism has become one of the main tools in the construction of PSC metrics. For closed manifolds of dimension at least three, the existence problem
admits a variational formulation through the Yamabe invariant \cite{Kobayashi1987}. A manifold admits a PSC metric if and only if its Yamabe invariant is positive. Thus the existence problem is encoded by a diffeomorphism invariant defined by optimizing over conformal classes \cite{Trudinger1968,Aubin1976,Schoen1984}. On a closed spin manifold, the Lichnerowicz formula implies that a PSC metric forces the spin Dirac operator to have trivial kernel \cite{Lichnerowicz1963,Hitchin1974,GromovLawson1980Spin}.

In K\"ahler geometry, there is an additional layer of rigidity. The existence problem is no longer only a question of whether the underlying smooth manifold admits a PSC metric, but also depends on the complex structure and on the chosen K\"ahler class. On the other hand, positive scalar curvature also imposes a birational-geometric restriction on the underlying complex manifold. More precisely, the canonical bundle of a compact PSC K\"ahler manifold is not pseudoeffective. By Ou's characterization \cite{Ou2025}, such a manifold is uniruled.  This implication should be compared with Yang's theorem in Hermitian geometry: a compact complex manifold admits a Hermitian metric with positive Chern scalar curvature if and only if its canonical bundle is not pseudoeffective \cite{Yang2019}. 
%In higher dimension, uniruledness does not in general imply the existence of a positive scalar curvature K\"ahler metric, as shown by the counterexamples constructed in the companion paper \cite{Sha2026UniruledCounterexample}.

For compact K\"ahler surfaces, we have full classification of PSC manifolds. Combining the work of Yau \cite{Yau1974}, LeBrun \cite{LeBrun1995}, Friedman--Morgan \cite{FriedmanMorgan1997}, Brown \cite{Brown2026}, and the Enriques--Kodaira classification, the conditions: \((1)\) \(X\) admits a PSC K\"ahler metric; \((2)\) \(X\) admits a PSC Riemannian metric; \((3)\) the Kodaira dimension of \(X\) is \(-\infty\); are equivalent. Moreover, any PSC K\"ahler surface is obtained from \(\Pj^2\) or a ruled surface by a finite sequence of point blowups. This manifold-level classification does not, however, determine which individual K\"ahler classes contain PSC K\"ahler metrics.

\subsection{Main results}
\label{subsec:intro-main-results}

For the present paper, the decisive point is that \(K\)-stability contains more information than the binary distinction between stability and instability. In non-Archimedean geometry, the Donaldson--Futaki invariant admits an interpretation through the non-Archimedean Mabuchi functional \cite{BoucksomHisamotoJonsson2017}. Moreover, the asymptotic slope of the Mabuchi functional along a ray associated with a test configuration is identified with its non-Archimedean counterpart \cite{BoucksomHisamotoJonsson2019}. Thus uniform \(K\)-stability can be viewed as the strict positivity of the optimal lower bound of the ratio between the non-Archimedean Mabuchi functional and the non-Archimedean Aubin \(J\)-functional. Our starting point is to show that this quantitative stability scale admits another differential-geometric interpretation. For a K\"ahler class with positive average scalar curvature \(s\), after shifting the Mabuchi functional by \(sJ\), the positivity of the resulting threshold is equivalent to the existence of a positive scalar curvature K\"ahler metric. In particular, the numerical data underlying \(K\)-stability continue to carry geometric information even in the range where uniform \(K\)-stability fails.

Let \(X\) be a compact K\"ahler manifold and let \(\alpha\) be a K\"ahler class with positive average scalar curvature \(s>0\). Fix a reference K\"ahler metric \(\omega\in\alpha\), and denote by \(M_\omega\) the Mabuchi functional and by \(E_\omega\) the Aubin--Yau energy. Let \(\Omega >0\) be a smooth positive volume form satisfying
\[
        \int_X\Omega
        =
        \int_XS(\omega)\,\omega^n,
\]
where \(S(\omega)\) is the scalar curvature of \(\omega\). Consider the prescribed scalar curvature measure functional (\(F\)-functional) introduced in \cite{Sha2026PSCVariational}:
\[
        F_{\omega,\Omega}
        :=
        M_\omega
        -
        \overline S_\alpha E_\omega
        +
        L_{\omega,\Omega},
        \qquad
        L_{\omega,\Omega}(u)
        :=
        \frac{1}{\operatorname{Vol}_\omega(X)}\int_Xu\,\Omega .
\]
Its Euler--Lagrange equation is precisely
\begin{equation}\label{eq:scalar-measure}
            S(\omega_u)\,\omega_u^n=\Omega,\qquad \omega_u =\omega+\ddbar u .
\end{equation}
The essential feature of the \(F\)-functional is that the change of target measure \(\Omega\) does not affect the solvability of \eqref{eq:scalar-measure}. Indeed, by \cite[Theorem A]{Sha2026PSCVariational}, \(\alpha\) contains a PSC metric if and only if \eqref{eq:scalar-measure} is solvable for one, equivalently every, admissible measure \(\Omega\). We may therefore make the canonical choice relative to the reference metric \(\omega\), so that \(\Omega_\omega:=s\omega^n.\) Therefore the corresponding \(F\)-functional becomes
\[
        F_{\omega,\Omega}
        =
        M_\omega+s J_\omega,
\]
where \(J_\omega\) is the Aubin \(J\)-functional. The classical algebro-geometric formulation of \(K\)-stability is attached to a polarized manifold \((X,L)\), and hence, up to scaling, to a rational K\"ahler class. The existence of PSC K\"ahler metrics, however, is naturally posed for arbitrary real K\"ahler classes. We therefore work first in the transcendental framework in the sense of Darvas--Zhang~\cite{DarvasZhang2025}, with K\"ahler test configurations \cite{DervanRoss2017,SjostromDyrefelt2018}.

For \(\beta>0\), let \(M^\beta\) denote the quantized Mabuchi functional \cite{DarvasZhang2025}, and define
\begin{equation}
        F^\beta:=M^\beta+sJ.
        \label{eq:intro-F-beta}
\end{equation}
If \(\mathcal T\) is a K\"ahler test configuration for \((X,\alpha)\), we denote by \(F^\beta(\mathcal T)\) and \(J(\mathcal T)\) the radial slopes of the corresponding functionals along the associated geodesic ray.

Given \(\beta>0\), we say that \((X,\alpha)\) is \emph{shifted \(K^\beta\)-stable} if there exists \(\delta>0\) such that
\begin{equation}
        F^\beta(\mathcal T)
        \geq
        \delta J(\mathcal T)
        \label{eq:intro-shifted-Kbeta-stability}
\end{equation}
for every K\"ahler test configuration \(\mathcal T\) for \((X,\alpha)\). We retain the optimal lower bound in \eqref{eq:intro-shifted-Kbeta-stability} and then optimize over \(\beta>0\). More precisely, define the \emph{PSC threshold} by
\begin{equation}
        \sigma(X,\alpha)
        :=
        \sup_{\beta>0}
        \inf_{J(\mathcal T)>0}
        \frac{F^\beta(\mathcal T)}
             {J(\mathcal T)},
        \label{eq:intro-sigma-DZ}
\end{equation}
where the infimum is taken over all K\"ahler test configurations \(\mathcal T\) for \((X,\alpha)\) satisfying \(J(\mathcal T)>0\).

Our first result is the following transcendental criterion.
\begin{theorem}
\label{thm:main-DZ}
Let \(X\) be a compact K\"ahler manifold and let \(\alpha\) be a real K\"ahler class with positive average scalar curvature. Then the following conditions are equivalent:
\begin{enumerate}[label=\textup{(\roman*)}]
\item
\(\alpha\) contains a positive scalar curvature K\"ahler metric;

\item
\((X,\alpha)\) is shifted \(K^\beta\)-stable for some \(\beta>0\);

\item
\(\sigma(X,\alpha)>0\).
\end{enumerate}
\end{theorem}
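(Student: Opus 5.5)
The plan is to run the Darvas--Zhang machinery \cite{DarvasZhang2025} with the Mabuchi functional replaced by the shifted functional \(F=M+sJ\). By \cite[Theorem A]{Sha2026PSCVariational}, condition (i) is equivalent to the solvability of \eqref{eq:scalar-measure} with \(\Omega=s\omega^n\). The variational results of that paper should further identify solvability with \(d_1\)-coercivity of \(F_\omega=M_\omega+sJ_\omega\) on \(\mathcal E^1\). Here no automorphism quotient should be needed: the twisting term \(L_{\omega,\Omega}\) is strictly convex along geodesics in the relevant sense, as for twisted cscK equations.

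The equivalence (ii)\(\Leftrightarrow\)(iii) is formal. If (ii) holds with \(\delta\) for some \(\beta\), then the infimum at that \(\beta\) is at least \(\delta\), so \(\sigma\ge\delta>0\). Conversely, if \(\sigma>0\), pick \(\beta\) whose infimum exceeds \(\sigma/2\); then \eqref{eq:intro-shifted-Kbeta-stability} holds with \(\delta=\sigma/2\). Test configurations with \(J(\mathcal T)=0\) are harmless, because such rays are trivial and then \(F^\beta(\mathcal T)=0\).

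\textbf{(i)\(\Rightarrow\)(ii).} From (i) we obtain coercivity \(F\ge\varepsilon J-C\) on \(\mathcal H\). Since \(M^\beta\to M\) with error controlled as in Darvas--Zhang, namely \(M^\beta\ge M-C_\beta - o(1)J\) as \(\beta\to\infty\) uniformly along rays, we get \(F^\beta\ge(\varepsilon/2)J-C'\) for \(\beta\) large. Dividing by \(t\) along the geodesic ray of any Kähler test configuration and letting \(t\to\infty\) yields \(F^\beta(\mathcal T)\ge(\varepsilon/2)J(\mathcal T)\). The term \(sJ\) is linear, and its slope is the radial slope of \(J\), so it causes no difficulty.

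\textbf{(ii)\(\Rightarrow\)(i).} This is the main direction, and I would follow the Darvas--Zhang proof of their YTD theorem. Suppose \(F\) is not coercive. Then for every \(\delta>0\) there are potentials violating \(F\ge\delta J-C\). Using geodesic convexity and lower semicontinuity of \(F\) on \(\mathcal E^1\), since both \(M\) and \(J\) are convex, we build a finite-energy geodesic ray \(r_t\) with \(J(r_t)=t\) along which \(F(r_t)\le\delta t\). We then approximate \(r_t\) by rays of Kähler test configurations, using the quantized \(\beta\)-Bergman-type approximation specific to \(M^\beta\). Along these rays, the radial slopes of \(M^\beta\) converge to at most the slope of \(M\) along \(r\), up to an error going to \(0\) as \(\beta\to\infty\). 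This is exactly what Darvas--Zhang prove for \(M\). For the extra term \(sJ\), the slopes are continuous under \(d_1\)-approximation of rays, because \(J\) is \(d_1\)-Lipschitz. Combining these gives \(\inf F^\beta(\mathcal T)/J(\mathcal T)\le\delta+o(1)\) for all \(\beta\), which contradicts (ii). One subtlety is that (ii) fixes one \(\beta\) while the approximation wants \(\beta\) large. I would first show that the infimum in \eqref{eq:intro-sigma-DZ} is nondecreasing in \(\beta\) up to \(o(1)\), or else use directly the Darvas--Zhang statement that uniform \(K^\beta\)-stability for a single \(\beta\) gives coercivity of \(M^\beta\) and hence, via comparison, of \(M\) shifted appropriately.

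The step I expect to be the main obstacle is this transfer in (ii)\(\Rightarrow\)(i). The Darvas--Zhang argument must be rechecked with \(M\) replaced by \(F\), specifically the approximation of maximal destabilizing geodesic rays by Kähler test configuration rays with control of the \(M^\beta\)-slope. I would try to reduce it entirely to their theorem by observing that \(sJ\) is a sum of energy-type terms, \(s(\int u\,\omega^n/V - E)\), for which slope formulas are exact and continuous.
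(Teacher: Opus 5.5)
Your architecture matches the paper's: reduce (i) to \(d_1\)-coercivity of \(F=M+sJ\) via \cite[Theorem~A]{Sha2026PSCVariational}, quantize for (i)\(\Rightarrow\)(ii), approximate by test-configuration rays for (ii)\(\Rightarrow\)(i), and get (ii)\(\Leftrightarrow\)(iii) from the definition. However, the direction (ii)\(\Rightarrow\)(i) is left open exactly where the key idea is needed. You let the slope approximation carry an error that vanishes only as \(\beta\to\infty\), and so you cannot conclude from stability at a single \(\beta\). No limit in \(\beta\) is required. For the fixed \(\beta\) given by (ii), the radial slope \(M^\beta\{\cdot\}\) is \(d_1^c\)-continuous \cite[Theorem~5.2]{DarvasZhang2025}, and so is \(J\{\cdot\}\). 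Every ray with \(M\{\ell\}<+\infty\) (automatic when \(F\{\ell\}<+\infty\)) is a \(d_1^c\)-limit of K\"ahler test-configuration rays \cite{MesquitaPiccione2024}, so \(F^\beta\{\ell\}\geq\delta J\{\ell\}\) passes to all such rays. The elementary pointwise inequality \(\Ent^\beta\leq\Ent\), i.e.\ \(F^\beta\leq F\), then gives \(F\{\ell\}\geq F^\beta\{\ell\}\geq\delta J\{\ell\}\) for every ray. This is uniform geodesic stability of \(F\), and \cite[Theorem~A]{Sha2026PSCVariational} yields a PSC metric. With this you need not extract the destabilizing ray yourself; if you do, convexity and lower semicontinuity are not enough, and you also need relative compactness of \(F\)-sublevel sets in \(d_1\)-balls, which rests on entropy compactness. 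Neither of your proposed fixes works as stated. Monotonicity of the infimum in \(\beta\) is never established. The Darvas--Zhang theorem cannot be cited directly, since shifted \(K^\beta\)-stability only gives \(M^\beta(\mathcal T)\geq(\delta-s)J(\mathcal T)\), where \(\delta-s\) may be negative.

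In (i)\(\Rightarrow\)(ii) your conclusion is right (it is the content of Proposition~\ref{prop:DZ-shifted-quantization}), but the inequality \(M^\beta\geq M-C_\beta-o(1)J\) you invoke is false. Jensen's inequality gives \(\Ent^\beta\leq\beta J\), while \(\Ent\) is unbounded on \(d_1\)-bounded sets, so \(M-M^\beta=\Ent-\Ent^\beta\) admits no such bound. The working mechanism compares \(F^\beta(u)\) with \(F\) at the \emph{quantized} potential. The bound \(\Ent^\beta(u)\geq\Ent(\omega_{u^\beta}^n)\) \cite[Lemma~3.8]{DarvasZhang2025} gives \(F^\beta(u)\geq F(u^\beta-\sup_Xu^\beta)+B_s(u^\beta)-B_s(u)\). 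One then controls \(B_s(u^\beta)-B_s(u)\) by \(C\beta^{-a}(\Ent^\beta(u)+d_1(0,u)+1+\log\beta)\), and \(d_1(0,u)-d_1(0,u^\beta-\sup_Xu^\beta)\) by \(C\) plus the same quantity. Finally one absorbs \(\Ent^\beta=F^\beta+B_s\) using \(|B_s|\leq Cd_1\). Contrary to your remark, the \(sJ\) term does not cancel in this comparison. You need \(|J(u^\beta)-J(u)|\) to be small, which follows from \(E(u^\beta)-E(u)=\Ent^\beta(u)/\beta\) together with the Darvas--Zhang estimate on \(\int_X(u^\beta-u)\,\omega^n\).
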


We next pass to the polarized setting. Let \(L\) be an ample line bundle and set \(\alpha=2\pi c_1(L)\). Denote \(\cE^1_{\mathrm{na}}(L)\) by the space of finite-energy non-Archimedean potentials associated with \(L\), and let \(M_{\mathrm{na}}\) and \(J_{\mathrm{na}}\) denote the non-Archimedean Mabuchi and Aubin \(J\)-functionals, respectively. The \(F\)-functional has a non-Archimedean counterpart
\begin{equation}
        F_{\mathrm{na}}
        :=
        M_{\mathrm{na}}+sJ_{\mathrm{na}}
        \quad\text{on }\cE^1_{\mathrm{na}}.
        \label{eq:intro-F-na}
\end{equation}

We say that \((X,L)\) is \emph{shifted \(\widehat K\)-stable} if
\begin{equation}
        F_{\mathrm{na}}(\varphi)>0
        \qquad
        \text{for every }
        \varphi\in\cE^1_{\mathrm{na}}(L)
        \text{ with }J_{\mathrm{na}}(\varphi)>0.
        \label{eq:intro-shifted-Khat-stability}
\end{equation}
In particular, \(\alpha\) contains a PSC K\"ahler metric if and only if \( (X,L)\) is shifted \(\widehat K\)-stable (Theorem \ref{thm:BJ-PSC-criterion}). A key feature of this criterion is that the positivity in \eqref{eq:intro-shifted-Khat-stability} is automatically uniform: by Proposition \ref{prop:BJ-F-compatibility}, it is equivalent to the existence of \(\delta>0\) such that
\[
        F_{\mathrm{na}}(\varphi)
        \geq
        \delta J_{\mathrm{na}}(\varphi)
        \qquad
        \text{for every }
        \varphi\in\cE^1_{\mathrm{na}}(L).
\]
This leads to the \emph{non-Archimedean PSC threshold}
\begin{equation}
        \widehat{\sigma}(X,L)
        :=
        \inf_{\substack{
        \varphi\in\cE^1_{\mathrm{na}}\\
        J_{\mathrm{na}}(\varphi)>0}}
        \frac{F_{\mathrm{na}}(\varphi)}
             {J_{\mathrm{na}}(\varphi)}.
        \label{eq:intro-sigma-BJ}
\end{equation}
Moreover, \(\alpha\) contains a PSC K\"ahler metric if and only if \(\widehat{\sigma}(X,L)>0\).

Our second result identifies this non-Archimedean PSC threshold \(\widehat\sigma\) with the transcendental PSC threshold \(\sigma\) introduced above in the polarized setting.

\begin{theorem}
\label{thm:main-BJ}
Let \((X,L)\) be a polarized manifold, and assume that
\(2\pi c_1(L)\) has positive average scalar curvature. Then
\begin{equation}
        \sigma\bigl(X,2\pi c_1(L)\bigr)
        =
        \widehat{\sigma}(X,L).
        \label{eq:main-threshold-equality}
\end{equation}
\end{theorem}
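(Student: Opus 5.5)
The plan is to prove the two inequalities \(\sigma\le\widehat\sigma\) and \(\widehat\sigma\le\sigma\) separately. Both rest on the same mechanism: the threshold is the largest \(\delta\) for which the shifted functional \(F-\delta J\) is coercive. Set \(\alpha=2\pi c_1(L)\). The first key observation is that, for \(\delta\in\R\),
\[
F_{\mathrm{na}}-\delta J_{\mathrm{na}}=M_{\mathrm{na}}+(s-\delta)J_{\mathrm{na}},
\]
and similarly on the Archimedean side. So \(\widehat\sigma\ge\delta\) says that \(M_{\mathrm{na}}\ge-(s-\delta)J_{\mathrm{na}}\) on \(\cE^1_{\mathrm{na}}\), that is, a \(J\)-twisted \(\widehat K\)-semistability. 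By the Boucksom--Jonsson correspondence and its twisted version, this should be equivalent to the Archimedean statement
\[
M_\omega+(s-\delta)J_\omega\ \ge\ -\varepsilon d_1-C_\varepsilon\qquad\text{for every }\varepsilon>0.
\]
The corresponding Archimedean statement for \(\sigma\) is obtained through the Darvas--Zhang quantized energies. My plan is therefore to show that both thresholds equal the \emph{Archimedean coercivity threshold}
\[
\sigma_{\mathrm{ar}}:=\sup\bigl\{\delta:\ F_\omega\ge\delta J_\omega-C\ \text{on }\cE^1\bigr\},
\]
after first checking that the sup may be replaced by a sup over strict coercivity with slightly smaller \(\delta\).

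\textbf{Step 1: \(\widehat\sigma=\sigma_{\mathrm{ar}}\).} For the inequality \(\sigma_{\mathrm{ar}}\le\widehat\sigma\), take a coercivity estimate \(F_\omega\ge\delta J_\omega-C\). Given \(\varphi\in\cE^1_{\mathrm{na}}\), I would evaluate the estimate along the maximal geodesic ray attached to \(\varphi\) (Berman--Boucksom--Jonsson), divide by \(t\), and pass to the limit. This uses the slope formulas \(M_{\mathrm{na}}(\varphi)=\lim_t M(u_t)/t\) along maximal rays, together with linearity of the \(J\)-slope. For the reverse inequality I use the twisted-functional version of Boucksom--Jonsson's Theorem (\(\widehat K\)-stability implies coercivity). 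Since \(M_\omega+(s-\delta)J_\omega\) is a twisted Mabuchi functional with the smooth positive twist \((s-\delta)\omega\) when \(\delta<s\), I would run their argument, or the Darvas--Lu geodesic stability argument, for twisted energies. If \(F_{\mathrm{na}}\ge\delta J_{\mathrm{na}}\), then for every \(\delta'<\delta\) one gets \(F_\omega\ge\delta' J_\omega-C\). The argument is by contradiction: a non-coercive sequence produces a unit-speed finite-energy geodesic ray along which \(F-\delta'J\) has nonpositive slope. For \(L\) ample, Berman--Boucksom--Jonsson shows that this ray is maximal up to a decrease of slope of the entropy term, which yields a \(\varphi\) violating the non-Archimedean bound. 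The case \(\delta\ge s\) needs separate care, but \(\widehat\sigma\le s\) holds trivially via the product-type negative directions \(J\)-slope scaling, so it can be handled by monotonicity.

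\textbf{Step 2: \(\sigma=\sigma_{\mathrm{ar}}\).} This is Darvas--Zhang's argument applied to \(F^\beta-\delta J\) in place of \(M^\beta\). The quantized energies \(M^\beta\) converge to \(M\) as \(\beta\to\infty\) with controlled error, \(|M^\beta-M|\le\varepsilon(\beta)(J+1)\) in the relevant sense. Their equivalence says that uniform slope positivity of \(M^\beta\) over Kähler test configurations, for some \(\beta\), is the same as \(d_1\)-coercivity of \(M\). Shifting by \((s-\delta)J\) preserves all the required properties: convexity along geodesics, and approximation of finite-energy rays by test-configuration rays with convergent \(J\)-slopes. This gives that \(\sup_\beta\inf F^\beta(\mathcal T)/J(\mathcal T)\ge\delta\) exactly when \(F_\omega-\delta'J_\omega\) is coercive for all \(\delta'<\delta\). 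Note that Theorem~\ref{thm:main-DZ} is the special case \(\delta=0\), so this step follows its proof verbatim with the extra twist.

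\textbf{Main obstacle.} The delicate point is the Archimedean-to-non-Archimedean direction in Step 1, namely producing a non-Archimedean \(\varphi\) from a destabilizing Archimedean ray. This requires the entropy slope on a non-maximal ray to dominate the non-Archimedean entropy of its maximalization, while \(J\) and \(E\) slopes are preserved. I would first try to import Boucksom--Jonsson's result verbatim, with the twist \(-(s-\delta)E+(s-\delta)L_{\omega}\)-type linear terms. Those terms are continuous and linear along rays, so their slopes are computed by the energy-pairing formula without requiring maximality.
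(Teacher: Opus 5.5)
Your architecture matches the paper's: \(\sigma_{\mathrm{ar}}\) is condition \textup{(iii)} of Proposition~\ref{prop:strict-level-comparison}, and your three links are the ones used there, namely slopes along maximal rays, ray extraction combined with the radial identity \eqref{eq:BJ-F-radial-identity}, and the shifted Darvas--Zhang quantization. The gap is your treatment of thresholds above \(s\).

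\emph{The claim that \(\widehat\sigma\le s\) holds trivially is false.} Product directions with \(J_{\mathrm{na}}>0\) exist only when \(\Aut^0(X,L)\) is nontrivial. If \(2\pi c_1(L)\) contains a cscK metric and \(\Aut^0(X,L)\) is trivial (for instance \(L=-K_X\) on a K\"ahler--Einstein Fano manifold with finite automorphism group), then \(M_{\mathrm{na}}\ge\delta J_{\mathrm{na}}\) for some \(\delta>0\), so \(\widehat\sigma\ge s+\delta>s\). This is exactly the second wall of Corollary~\ref{cor:BJ-two-walls}, so the equality \(\sigma=\widehat\sigma\) has content above \(s\).

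\emph{Your tools break down in that regime.} For \(\delta'>s\), the functional \(F-\delta'J=M-(\delta'-s)J\) has a negative twist. It is not known to be geodesically convex, because \(J=\frac{1}{\operatorname{Vol}_\omega(X)}\int_X u\,\omega^n-E\) is convex along geodesics, so \(-(\delta'-s)J\) is concave. Two steps of yours need this convexity and therefore fail as stated:
\begin{itemize}
\item the Step~1 contradiction argument, which passes from a non-coercive sequence to a ray along which the shifted functional has nonpositive slope;
\item the Step~2 claim that the shift preserves convexity.
\end{itemize}

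\emph{The missing idea is to run the ray extraction for \(F\) itself and put the shift on \(d_1\), on the sup-normalized slice.} Suppose \(F(u_j)<(q+\eta)\,d_1(0,u_j)-j\) with \(\sup_Xu_j=0\).
\begin{itemize}
\item The geodesic segments \(u_{j,t}\) from \(0\) to \(u_j\) stay sup-normalized and satisfy \(d_1(0,u_{j,t})=t\) exactly.
\item Convexity of \(F=M+sJ\) and \(F(0)=0\) then give \(F(u_{j,t})\le(q+\eta)t\), whatever the sign of \(q+\eta\).
\item Compactness of \(F\)-sublevel sets in \(d_1\)-balls produces a unit-speed ray with \(F\{\ell\}\le q+\eta\).
\item The comparison \(J\le d_1\le J+C\) on the slice gives \(J\{\ell\}=1\), contradicting \(F\{\ell\}\ge(q+2\eta)J\{\ell\}\).
\item Converting \(d_1\) back to \(J\) on the slice yields \(F\ge(q+\eta)J-C\).
\end{itemize}
This treats all \(q\in\R\) at once and makes a twisted Boucksom--Jonsson theorem unnecessary.

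Two smaller corrections.
\begin{itemize}
\item The bound \(|M^\beta-M|\le\varepsilon(\beta)(J+1)\) is false, since \(\Ent\) may be \(+\infty\) where \(\Ent^\beta\) is finite. What is available is \(F^\beta\le F\), together with a lower bound of \(F^\beta(u)\) by \(F(u^\beta-\sup_Xu^\beta)\). The error there is of size \(C\beta^{-a}(\Ent^\beta(u)+d_1(0,u)+1+\log\beta)\), and it is absorbed via \(\Ent^\beta=F^\beta+B_s\).
\item The \(E\)- and \(J\)-slopes are not preserved under maximalization of a non-maximal ray: the \(E\)-slope strictly increases to \(E_{\mathrm{na}}\) and the \(J\)-slope decreases. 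The radial identity applies only because rays of finite \(F\)-slope are already maximal.
\end{itemize}
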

Thus, in the polarized setting, the PSC threshold admits an intrinsic, \(\beta\)-independent non-Archimedean realization. In what follows, we write
\(\sigma:=\sigma\bigl(X,2\pi c_1(L)\bigr)=\widehat{\sigma}(X,L)\). The two distinguished values of this common threshold are \(0\) and \(s\). Combining Theorems \ref{thm:main-DZ} and \ref{thm:main-BJ} with Corollary \ref{cor:BJ-two-walls} gives the following metric interpretation.

\begin{corollary}
\label{cor:intro-metric-walls}
Let \((X,L)\) be as in Theorem \ref{thm:main-BJ}. Then:
\begin{enumerate}[label=\textup{(\roman*)}]
\item
\(2\pi c_1(L)\) contains a positive scalar curvature K\"ahler metric if and only if \(\sigma>0\);

\item
\(2\pi c_1(L)\) contains a unique cscK metric if and only if \(\sigma>s\).
\end{enumerate}
\end{corollary}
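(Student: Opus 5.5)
Part (i) is immediate from the results already stated. By Theorem \ref{thm:main-BJ}, the transcendental and non-Archimedean thresholds agree, so $\sigma=\sigma(X,2\pi c_1(L))=\widehat\sigma(X,L)$. By Theorem \ref{thm:main-DZ} (equivalence of (i) and (iii)), $2\pi c_1(L)$ contains a PSC K\"ahler metric if and only if $\sigma(X,2\pi c_1(L))>0$. This is exactly the first assertion. Equivalently, one can argue on the non-Archimedean side: shifted $\widehat K$-stability (Theorem \ref{thm:BJ-PSC-criterion}) is equivalent to $\widehat\sigma>0$, by the automatic uniformity of Proposition \ref{prop:BJ-F-compatibility}.

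For (ii) I would work entirely with $\widehat\sigma$. Since $F_{\mathrm{na}}=M_{\mathrm{na}}+sJ_{\mathrm{na}}$, for every $\varphi$ with $J_{\mathrm{na}}(\varphi)>0$ we have
\[
\frac{F_{\mathrm{na}}(\varphi)}{J_{\mathrm{na}}(\varphi)}=\frac{M_{\mathrm{na}}(\varphi)}{J_{\mathrm{na}}(\varphi)}+s .
\]
Taking the infimum gives $\widehat\sigma=\delta_{\mathrm{na}}+s$, where $\delta_{\mathrm{na}}:=\inf M_{\mathrm{na}}/J_{\mathrm{na}}$ is the optimal uniform $\widehat K$-stability constant on $\cE^1_{\mathrm{na}}$. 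Hence $\sigma>s$ holds exactly when $\delta_{\mathrm{na}}>0$, i.e.\ when $M_{\mathrm{na}}\ge\delta J_{\mathrm{na}}$ on $\cE^1_{\mathrm{na}}(L)$ for some $\delta>0$. This is uniform $\widehat K$-stability in the sense of Boucksom--Jonsson. I expect Corollary \ref{cor:BJ-two-walls} to record exactly this wall at $s$.

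The remaining step is the analytic one, and it is where the real content lies. Uniform $\widehat K$-stability on the full space $\cE^1_{\mathrm{na}}$ is equivalent, by \cite{BoucksomJonsson2025YTD}, to $d_1$-coercivity of the Mabuchi functional with no automorphism quotient. That in turn is equivalent, by Darvas--Rubinstein together with Chen--Cheng and Berman--Darvas--Lu, to the existence of a cscK metric together with $\Aut_0(X,L)$ being trivial up to the scalar $\C^*$. On that side, coercivity without a quotient forces the cscK metric to be unique, because a nontrivial holomorphic vector field would produce a geodesic line along which $M$ is affine and $J$ grows, contradicting coercivity. Conversely, a unique cscK metric means no continuous automorphisms, so the reductive group $\Aut_0$ is trivial (by Matsushima) and the polystable coercivity becomes absolute coercivity. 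The main obstacle is exactly this matching: one must check that ``unique cscK'' in the statement is equivalent to discrete automorphism group, by Berman--Berndtsson uniqueness modulo $\Aut_0$. One must also check that the non-Archimedean uniform stability on all of $\cE^1_{\mathrm{na}}$ (not just on test configurations) is precisely the Boucksom--Jonsson characterization. I would cite that theorem directly rather than reprove it.
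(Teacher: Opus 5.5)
Part (i) of your proposal matches the paper: Theorem~\ref{thm:main-BJ} identifies the two thresholds, and Theorem~\ref{thm:main-DZ} gives the wall at \(0\).

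For (ii) the paper proves nothing new. It reads the statement off Corollary~\ref{cor:BJ-two-walls}(ii) through \(\sigma=\widehat\sigma\). Corollary~\ref{cor:DZ-two-walls}(ii), via Darvas--Zhang's Theorem~5.5, would even give it for \(\sigma\) directly. The real difference is in how the wall at \(s\) is justified.

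The paper stays non-Archimedean:
\begin{enumerate}[label=\textup{(\alph*)}]
\item \(M_{\mathrm{na}}\ge\delta J_{\mathrm{na}}\) implies \(\widehat K\)-polystability, hence a cscK metric by Boucksom--Jonsson's Theorem~A.
\item Their Corollary~8.8 gives \(M_{\mathrm{na}}=0\) on \(\mathcal P_{\mathbb R}\). So \(J_{\mathrm{na}}=0\) there, and every real product direction is trivial.
\item \(\Aut^0(X,L)\) is reductive, and a nontrivial reductive group contains a torus, so \(\Aut^0(X,L)\) is trivial. Berman--Berndtsson then gives uniqueness.
\item Conversely, uniqueness rules out nonconstant product directions, and the uniform form of Theorem~A gives \(M_{\mathrm{na}}\ge\delta\, d_{1,\mathrm{na}}(\cdot,\R)\ge\delta' J_{\mathrm{na}}\).
\end{enumerate}

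You instead pass to the Archimedean side, and the argument is correct. \(M_{\mathrm{na}}\ge\delta J_{\mathrm{na}}\) on \(\cE^1_{\mathrm{na}}\) is equivalent to \(d_1\)-coercivity of \(M\) on \(\cE^1_0\) with no quotient. This is the proof of Proposition~\ref{prop:BJ-F-compatibility} with \(M\) in place of \(F\): it uses strong lower semicontinuity of \(M\), from entropy compactness, and the radial identity of Boucksom--Jonsson's Theorem~C. You then invoke Darvas--Rubinstein, Chen--Cheng and Berman--Darvas--Lu. Finally you rule out automorphisms via the geodesic line generated by a holomorphic vector field.

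What each route buys:
\begin{itemize}
\item Yours bypasses \(\widehat K\)-polystability and product directions. It makes the mechanism explicit (coercivity with no quotient holds exactly when there is nothing to quotient by), in parallel with the transcendental Corollary~\ref{cor:DZ-two-walls}.
\item The paper's needs only the packaged non-Archimedean theorem, with no separate appeal to the Archimedean existence theory.
\end{itemize}

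One point needs tightening: the group that must be trivial is \(\Aut^0(X,L)\) (holomorphic fields with zeros, i.e.\ those lifting to \(L\)), not \(\Aut(X)\). Parallel fields act isometrically on the cscK metric and generate only constant geodesic lines; translations along an elliptic-curve factor of \(X=S\times E\) are an example. Such fields obstruct neither coercivity nor uniqueness. So your ``discrete automorphism group'' should read ``\(\Aut^0(X,L)\) trivial'', which is exactly what the Matsushima and Berman--Berndtsson step delivers. Likewise, your claim that ``a nontrivial holomorphic vector field'' yields a line along which \(J\) grows holds only for fields in the Lie algebra of \(\Aut^0(X,L)\).
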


The critical value \(\sigma=s\) requires some attention. The equality \(\sigma=s\) implies \(M_{\mathrm{na}}\geq0\) on \(\cE^1_{\mathrm{na}}\),
but it does not determine the zero set of \(M_{\mathrm{na}}\). If \(\Aut^0(X,L)\) is trivial, then any cscK metric is unique, and Corollary \ref{cor:BJ-two-walls} would imply \(\sigma>s\). Hence no cscK
metric exists when \(\sigma=s\) in this case. On the other hand, suppose that a cscK metric exists and that \(\Aut^0(X,L)\) is nontrivial. By the Matsushima--Lichnerowicz theorem \cite{Matsushima1957,Lichnerowicz1958}, \(\Aut^0(X,L)\) is reductive and therefore admits a nonconstant real product direction \(\varphi\in\mathcal P_{\mathbb R}\). The \(\widehat K\)-polystability gives \(M_{\mathrm{na}}\geq0\), while \cite[Corollary~8.8]{BoucksomJonsson2025YTD} yields \(M_{\mathrm{na}}(\varphi)=0\). Since \(J_{\mathrm{na}}(\varphi)>0\), it follows that \(\sigma=s\). The converse does not follow from the numerical equality \(\sigma=s\). By \cite[Theorem~A]{BoucksomJonsson2025YTD}, cscK existence is equivalent to \(\widehat K\)-polystability, which in addition requires
\[
        M_{\mathrm{na}}(\varphi)=0
        \quad\Longrightarrow\quad
        \varphi\in\mathcal P_{\mathbb R}.
\]
Thus, at \(\sigma=s\), cscK existence depends on the zero set of \(M_{\mathrm{na}}\), information that is not captured by the threshold alone.

\subsection{Entropy regularization and \(K\)-stability}

The Boucksom--Jonsson entropy regularization conjecture was recently proved by Trusiani through special Fujita approximations \cite[Corollary~A]{Trusiani2026YTD} (and Appendix A in the latest version contributed by Boucksom). Applied with the trivial group action, the result gives that for every \(\varphi\in\cE^1_{\mathrm{na}}\) satisfying \(\Ent_{\mathrm{na}}(\varphi)<+\infty\), there exists a sequence \(\varphi_j\in\cH_{\mathrm{na}}\) such that
\[
        d_{1,\mathrm{na}}(\varphi_j,\varphi)\longrightarrow0,
        \qquad
        \Ent_{\mathrm{na}}(\varphi_j)
        \longrightarrow
        \Ent_{\mathrm{na}}(\varphi).
\]
Here \(\cH_{\mathrm{na}}\) is represented by normal ample test configurations for \((X,L)\).

Since \(M_{\mathrm{na}}-\Ent_{\mathrm{na}}\) and
\(J_{\mathrm{na}}\) are continuous in the strong topology, and points of \(\cE^1_{\mathrm{na}}\) at which
\(M_{\mathrm{na}}=+\infty\) do not affect the infimum, entropy regularization yields
\begin{equation} 
\begin{aligned} 
\inf_{\substack{ \psi\in\cH_{\mathrm{na}}\\ J_{\mathrm{na}}(\psi)>0}} \frac{M_{\mathrm{na}}(\psi)} {J_{\mathrm{na}}(\psi)} = \inf_{\substack{ \varphi\in\cE^1_{\mathrm{na}} \\J_{\mathrm{na}}(\varphi)>0}} \frac{M_{\mathrm{na}}(\varphi)} {J_{\mathrm{na}}(\varphi)} =\sigma-s. 
\end{aligned} 
\label{eq:intro-entropy-regularized-threshold} 
\end{equation} 
The left-hand side of \eqref{eq:intro-entropy-regularized-threshold} is directly related to classical \(K\)-stability. Indeed, applying \cite[Proposition~8.2]{BoucksomHisamotoJonsson2017} with trivial boundary, \(M_{\mathrm{na}}\ge 0\) on \(\cH_{\mathrm{na}}\) is equivalent to classical \(K\)-semistability, whereas \(M_{\mathrm{na}}\geq\delta J_{\mathrm{na}}\) on \(\cH_{\mathrm{na}}\) for some \(\delta>0\) is equivalent to uniform \(K\)-stability. The comparison with the Donaldson--Futaki invariant uses the normalized-base-change argument in the proof of that
proposition. Consequently, \((X,L)\) is \(K\)-semistable precisely when \(\sigma\geq s\), and uniformly \(K\)-stable precisely when \(\sigma>s\). In particular, \((X,L)\) is \(K\)-unstable if and only if \(\sigma<s\). At the critical value \(\sigma=s\), as explained above, it does not determine the existence of a cscK metric without further information on the zero set of \(M_{\mathrm{na}}\). The metric existence and stability conditions of the common threshold are summarized as follows.

\begin{table}[htbp]
\centering
\small
\renewcommand{\arraystretch}{1.3}
\begin{tabular}{@{}
p{0.16\textwidth}
p{0.38\textwidth}
p{0.35\textwidth}
@{}}
\toprule
Range of \(\sigma\)
&
Metrics in \(2\pi c_1(L)\)
&
 \(K\)-stability of \((X,L)\)
\\
\midrule
\(\sigma>s\)
&
A unique cscK metric
&
Uniformly \(K\)-stable
\\
\(\sigma=s\)
&
PSC K\"ahler metrics exist; cscK metric existence is not determined by \(\sigma\) alone
&
\(K\)-semistable
\\
\(0<\sigma<s\)
&
PSC K\"ahler metrics exist
&
\(K\)-unstable
\\
\(\sigma\leq0\)
&
No PSC K\"ahler metric
&
\(K\)-unstable
\\
\bottomrule
\end{tabular}
\caption{Metric existence and classical \(K\)-stability as determined by the common threshold \(\sigma\).}
\label{tab:intro-classical-threshold}
\end{table}

\subsection*{On the use of AI}

The author used ChatGPT 5.6 Sol for preliminary brainstorming, language editing, and consistency checks during the preparation of this manuscript. All mathematical statements, proofs, and citations were independently checked and remain the sole responsibility of the author.

\subsection*{Acknowledgement}

The author is sincerely grateful to Prof. Xiuxiong Chen for his continued encouragement and support. The author also thanks Longteng Chen, Song Sun, Jian Wang, Mingchen Xia, and Qi Yao for helpful discussions.

\section{Preliminaries}
\label{sec:preliminaries}

Let \(X\) be a compact K\"ahler manifold of complex dimension \(n\), and let \(\alpha\) be a K\"ahler class. Throughout this paper, we assume that \(\alpha\) has positive average scalar curvature. Fix a reference K\"ahler metric \(\omega\in\alpha\). The scalar curvature and the average scalar curvature of \(\omega\) are defined by
\[
        S(\omega)
        :=
        \frac{
         n\,\Ric(\omega)\wedge\omega^{n-1}
        }{
        \omega^n
        },\qquad s
        :=
        \frac{1}{\operatorname{Vol}_\omega(X)}
        \int_X S(\omega)\,\omega^n
        >0,
\]
where \(s\) depends only on the class \(\alpha\). For \(u\in C^\infty(X,\mathbb R)\), set
\[
        \omega_u
        :=
        \omega+\sqrt{-1}\,\partial\bar\partial u,
        \qquad
        \cH_\omega
        :=
        \left\{
        u\in C^\infty(X,\mathbb R):
        \omega_u>0
        \right\}.
\]
On \(\cH_\omega\), consider the \(L^1\)-type Finsler norm
\[
        \|\xi\|_{1,u}
        :=
        \frac{1}{\operatorname{Vol}_\omega(X)}
        \int_X|\xi|\,\omega_u^n.
\]
We denote the associated path-length metric by \(d_1\). The metric completion of \(\cH_\omega\) is naturally identified with the finite-energy space \(\cE^1(X,\omega)\); see \cite{Darvas2015}.

Recall that the Aubin--Yau energy on \(\cH_\omega\) is given by
\begin{equation}
        E(u)
        :=
        \frac{1}{
        (n+1)\operatorname{Vol}_\omega(X)}
        \sum_{j=0}^n
        \int_X
        u\,\omega_u^j\wedge\omega^{n-j}.
        \label{eq:normalized-AY-energy}
\end{equation}
The Aubin \(J\)-functional is
\begin{equation}
        J(u)
        :=
        \frac{1}{\operatorname{Vol}_\omega(X)}
        \int_Xu\,\omega^n-E(u).
        \label{eq:normalized-Aubin-J}
\end{equation}
Both functionals extend continuously to \(\cE^1(X,\omega)\). The Mabuchi functional \(M\) is characterized by \(M(0)=0\) and
\begin{equation}
        dM|_u(\dot \varphi)
        =
        -
        \frac{1}{\operatorname{Vol}_\omega(X)}
        \int_X
        \dot \varphi\,
        \bigl(S(\omega_u)-s\bigr)\omega_u^n.
        \label{eq:normalized-Mabuchi-variation}
\end{equation}
We use the same notation for its canonical lower semicontinuous extension \(M:\cE^1(X,\omega)\rightarrow\mathbb R\cup\{+\infty\}\). The reference metric determines the canonical smooth positive volume form \(s\,\omega^n\). The corresponding prescribed scalar-curvature measure functional is
\begin{equation}
        F(u)=M(u)+sJ(u).
        \label{eq:canonical-shifted-functional}
\end{equation}
Moreover, \(F\) also admits its canonical lower semicontinuous extension to \(\cE^1(X,\omega)\).

Since \(E(u+c)=E(u)+c\), the quotient \(\cE^1(X,\omega)/\mathbb R\) is naturally identified with the energy-normalized slice
\begin{equation}
        \cE^1_0(X,\omega)
        :=
        \left\{
        u\in\cE^1(X,\omega):
        E(u)=0
        \right\}.
        \label{eq:energy-normalized-slice}
\end{equation}
Since \(E\) is \(d_1\)-continuous and affine along finite-energy geodesics, this is a closed complete geodesic subspace of \((\cE^1(X,\omega),d_1)\). The \(F\)-functional is invariant under addition of constants and hence descend to \(\cE^1_0(X,\omega)\). We shall also use another normalization 
\[
        \cE^1_{\sup}(X,\omega)
        :=
        \left\{
        u\in\cE^1(X,\omega):
        \sup_Xu=0
        \right\}.
\]
In particular, there exists \(C\geq1\), depending only on \((X,\omega)\), such that
\begin{equation}
        C^{-1}J(u)-C
        \leq
        d_1(0,u)
        \leq
        CJ(u)+C,
        \qquad u\in\cE^1_0(X,\omega);
        \label{eq:energy-slice-d1-J}
\end{equation}
see \cite[equation~(62)]{Darvas2015}, whereas
\begin{equation}
        J(u)
        \leq
        d_1(0,u)
        \leq
        J(u)+C,
        \qquad \sup_X u =0;
        \label{eq:sup-slice-d1-J}
\end{equation}
see \cite[the top of p.23]{DarvasZhang2025}. Consequently, we have the following:
\begin{lemma}\label{lem:d1_coer_change}
    Let \(G:\cE^1(X,\omega) \to \mathbb R \cup \{+\infty\}\) be invariant under addition of constants. Then the following conditions are equivalent:
    \begin{enumerate}[label=\textup{(\roman*)}]
\item \(G\) is \(d_1\)-coercive on \(\cE^1_0(X,\omega)\);
\item \(G\) is \(J\)-coercive on \(\cE^1(X,\omega)/\R\);
\item \(G\) is \(d_1\)-coercive on \(\cE^1_{\sup}(X,\omega)\).
\end{enumerate}
The positive coercivity coefficients in these three formulations need
not be the same.
\end{lemma}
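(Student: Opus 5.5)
The proof only has to transfer coercivity between the two normalizations of $\cE^1(X,\omega)/\R$ using \eqref{eq:energy-slice-d1-J} and \eqref{eq:sup-slice-d1-J}. First fix the meaning of the three conditions. $G$ is $d_1$-coercive on a slice $S$ if there are $\delta>0$ and $C\ge0$ with $G(u)\ge \delta d_1(0,u)-C$ for all $u\in S$. $G$ is $J$-coercive on $\cE^1(X,\omega)/\R$ if $G(u)\ge \delta J(u)-C$ for all $u\in\cE^1(X,\omega)$.

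This last condition is well posed. Since $E(u+c)=E(u)+c$ and $\frac{1}{\operatorname{Vol}_\omega(X)}\int_X (u+c)\,\omega^n=\frac{1}{\operatorname{Vol}_\omega(X)}\int_X u\,\omega^n+c$, the functional $J$ is invariant under addition of constants. By hypothesis so is $G$. Hence both sides of the $J$-coercivity inequality depend only on the class of $u$ in $\cE^1(X,\omega)/\R$.

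The key observation is that every class has a representative in each slice. If $u\in\cE^1(X,\omega)$, then $u-E(u)\in\cE^1_0(X,\omega)$. Also $u-\sup_X u\in\cE^1_{\sup}(X,\omega)$, because $\sup_X u<\infty$ for a quasi-psh function. Neither $G$ nor $J$ changes under these shifts, so each condition can be tested on the representative of our choice. The implications then run as follows.

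(i)$\Rightarrow$(ii). Let $u\in\cE^1(X,\omega)$ and put $v=u-E(u)\in\cE^1_0(X,\omega)$. By (i) and the lower bound in \eqref{eq:energy-slice-d1-J},
\[
G(u)=G(v)\ge \delta d_1(0,v)-C\ge \delta C_0^{-1}J(v)-\delta C_0-C=\delta C_0^{-1}J(u)-C',
\]
where $C_0$ denotes the constant in \eqref{eq:energy-slice-d1-J}.

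(ii)$\Rightarrow$(i). For $v\in\cE^1_0(X,\omega)$, the upper bound $d_1(0,v)\le C_0J(v)+C_0$ gives $J(v)\ge C_0^{-1}d_1(0,v)-1$. Inserting this into (ii) yields $G(v)\ge \delta C_0^{-1}d_1(0,v)-\delta-C$.

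(ii)$\Leftrightarrow$(iii). This is identical, using the representative $u-\sup_X u$ and \eqref{eq:sup-slice-d1-J}. From (iii), $G(u)\ge \delta d_1(0,v)-C\ge \delta J(v)-C=\delta J(u)-C$, where $v=u-\sup_X u$. Conversely, $J(v)\ge d_1(0,v)-C_1$ turns (ii) into $d_1$-coercivity on $\cE^1_{\sup}(X,\omega)$; here $C_1$ is the constant in \eqref{eq:sup-slice-d1-J}.

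The coefficients get multiplied by $C_0^{\pm1}$ in passing between (i) and (ii), so they differ in general. This is exactly the final remark of the lemma. The values $G=+\infty$ cause no trouble, since every inequality above holds trivially at such points.

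There is no substantial obstacle. The only points needing care are the invariance of $J$ and $G$ under constants and the finiteness of $\sup_X u$ for $u\in\cE^1(X,\omega)$. Both are immediate, and together they guarantee that each class has representatives in both slices.
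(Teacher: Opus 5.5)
Your proposal is correct and follows the paper's approach: the paper states this lemma as an immediate consequence of the comparison inequalities \eqref{eq:energy-slice-d1-J} and \eqref{eq:sup-slice-d1-J} without writing out a proof. You transfer coercivity between the two slices exactly through those inequalities, using that $G$ and $J$ are invariant under constants and that every class in $\cE^1(X,\omega)/\R$ has a representative in each slice.
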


\section{The shifted \(K^\beta\)-stability}
\label{sec:darvas-zhang}

Let \(X\) be a compact K\"ahler manifold.  Fix a K\"ahler class \(\alpha\) with positive average scalar curvature \(s>0\), choose a reference metric \(\omega\in\alpha\). All functionals below are written in the normalization fixed in \(\S\)\ref{sec:preliminaries}.

\subsection{The \(d_1\)-coercivity of \(F^\beta\)}

Let \(E\) be the volume-normalized Aubin--Yau energy.  For \(\beta>0\), the \(\beta\)-entropy defined in \cite{DarvasZhang2025} is
\begin{equation}
        \Ent^\beta(u)
        :=
        \sup_{v\in\cE^1(X,\omega)}
        \left\{
        -\log\left(
        \frac{1}{\operatorname{Vol}_\omega(X)}
        \int_Xe^{\beta(v-u)}\omega^n
        \right)
        +\beta\bigl(E(v)-E(u)\bigr)
        \right\}.
        \label{eq:DZ-beta-entropy}
\end{equation}
The supremum is attained at the solution \(u^\beta\) of
\begin{equation}
        \omega_{u^\beta}^{\,n}
        =
        e^{\beta(u^\beta-u)}\omega^n,
        \label{eq:DZ-quantization-equation}
\end{equation}
and we have the simple but crucial formula as in \cite[Proposition 3.6]{DarvasZhang2025}:
\begin{equation}
        \Ent^\beta(u)
        =\beta\bigl(E(u^\beta)-E(u)\bigr).
        \label{eq:DZ-beta-energy-identity}
\end{equation}

For a smooth closed real \((1,1)\)-form \(\chi\), let \(\mathcal J_\chi\) be the contracted energy of \cite[Section~4]{DarvasZhang2025}, defined by
\[
\mathcal J_\chi:=n\,E^\chi_\omega-\overline{\chi}\,E_\omega,\qquad \overline{\chi}=\frac{n}{\operatorname{Vol}_\omega(X)} \int_X \chi \wedge \omega^{n-1}.
\]
Hence the \(\beta\)-entropy decomposition of \(M^\beta\) is given by
\begin{equation}
        M^\beta=\Ent^\beta-\mathcal J_{\Ric(\omega)}.
        \label{eq:DZ-M-decomposition}
\end{equation}
We then have the \(\beta\)-entropy decomposition formula for the quantized \(F^\beta\)-functional
\begin{equation}
        F^\beta:=\Ent^\beta-B_s, \quad \text{where} \quad B_s:=\mathcal J_{\Ric(\omega)}-sJ.
        \label{eq:DZ-quantized-F}
\end{equation}

The lower-order term \(B_s\) is uniformly bounded on \(d_1\)-bounded subsets of \(\cE^1_0(X,\omega)\) as shown in \cite[Lemma~2.19]{Sha2026PSCVariational}. For the quantization argument, one needs the following finer estimate, which controls the variation of \(B_s\) under \(u\mapsto u^\beta\) with an explicit decay rate.

\begin{lemma} \label{lem:DZ-shifted-lower-order}
Let \(a:=2^{-n}\).  There exists \(C>0\), depending only on \((X,\omega)\) and \(s\), such that, for every \(\beta>1\) and every \(u\in\cE^1(X,\omega)\) with \(\sup_Xu=0\), we have
\begin{equation}
\begin{split}
        \bigl|B_s(u^\beta)-B_s(u)\bigr|
        \leq C\beta^{-a}
        \bigl(
        \Ent^\beta(u)+d_1(0,u)+1+\log\beta
        \bigr).
\end{split}
\label{eq:DZ-shifted-lower-order-estimate}
\end{equation}
Moreover,
\begin{equation}
        |B_s(u)|\leq C\,d_1(0,u).
        \label{eq:DZ-lower-order-linear-growth}
\end{equation}
\end{lemma}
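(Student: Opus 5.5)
The plan is to split \(B_s=\mathcal J_{\Ric(\omega)}-sJ\) and treat the two pieces separately. For \(\mathcal J_{\Ric(\omega)}\) we can use Darvas--Zhang directly: their proof that \(M^\beta\) converges to \(M\) rests on exactly this kind of estimate, namely \(|\mathcal J_\chi(u^\beta)-\mathcal J_\chi(u)|\le C\beta^{-a}(\Ent^\beta(u)+d_1(0,u)+1+\log\beta)\) with \(a=2^{-n}\). This is their comparison of contracted energies under the quantization map \(u\mapsto u^\beta\). So we only need the analogous bound for \(J\). Since \(J=\frac{1}{V}\int u\,\omega^n-E\) and \(\mathcal J_\omega\) is (up to normalization) a multiple of \(J\), we have \(J=c_n\mathcal J_{\omega}\) for \(\chi=\omega\). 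Applying the same Darvas--Zhang estimate with \(\chi=\omega\) then gives the \(J\)-part, and we sum the two bounds with constants depending on \(s\).

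If the cited estimate is stated only for general \(\chi\) via a decomposition \(\chi\le C\omega\), we will reproduce its mechanism instead. We write
\[
\mathcal J_\chi(u^\beta)-\mathcal J_\chi(u)=\int_0^1\frac{d}{dt}\mathcal J_\chi(u+t(u^\beta-u))\,dt,
\]
which is an integral of \((u^\beta-u)\) against mixed Monge--Ampère measures of the form \(\chi\wedge\omega_{u_t}^{\,n-1}\) and \(\omega_{u_t}^n\). Each such term is then controlled through the \(L^1\)-type bound \(\int|u^\beta-u|\,\omega_{w}^n\). That bound follows from the quantization equation \eqref{eq:DZ-quantization-equation} and \eqref{eq:DZ-beta-energy-identity}. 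The identity gives \(E(u^\beta)-E(u)=\Ent^\beta(u)/\beta\), and Jensen's inequality applied to \(\omega_{u^\beta}^n=e^{\beta(u^\beta-u)}\omega^n\) gives \(\sup(u^\beta-u)\lesssim(\log\beta+\ldots)/\beta\). Together these show \(d_1(u,u^\beta)\lesssim\beta^{-1}(\Ent^\beta(u)+\log\beta+C)\), up to terms depending on \(d_1(0,u)\). Passing from \(d_1\)-closeness to closeness of mixed energies costs the standard Hölder-type exponent \(2^{-n}\) from iterated Cauchy--Schwarz (the Błocki/BBEGZ inequality \(|\int(f-g)\,\omega_{w}^{n}|\lesssim d_1^{\,2^{-n}}\cdot(\ldots)^{1-2^{-n}}\)). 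This is where \(a=2^{-n}\) comes from and where the linear factor \(\Ent^\beta+d_1+1+\log\beta\) appears, after absorbing powers via \(x^{a}y^{1-a}\le x+y\) and using \(\beta^{-a}\ge\beta^{-1}\).

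The second estimate \eqref{eq:DZ-lower-order-linear-growth} is more elementary. Both \(\mathcal J_\chi\) and \(J\) vanish at \(0\) and are \(d_1\)-Lipschitz on \(\cE^1\) with a constant depending on \(\|\chi\|_{C^0}\). This follows from \(|\int(u-v)\chi\wedge\omega_{w_1}\wedge\cdots|\le C\int|u-v|(\omega+\omega_{w})^{n}\)-type bounds and Darvas' comparison of \(d_1\) with such integrals. Hence \(|B_s(u)|\le (C_\chi+sC)\,d_1(0,u)\).

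The main obstacle is the first step: getting the explicit rate \(\beta^{-a}\) with a factor that grows only linearly in \(\Ent^\beta\) and \(d_1\). This requires controlling \(d_1(0,u^\beta)\) by \(d_1(0,u)+\Ent^\beta(u)/\beta\) under the normalization \(\sup u=0\), and this is the place where I would invoke the Darvas--Zhang estimates rather than redo them.
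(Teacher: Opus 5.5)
There is a genuine gap in the $J$-part. Citing Darvas--Zhang for the $\mathcal J_{\Ric(\omega)}$-term is what the paper does, but your treatment of $J$ rests on a false identity. With the paper's normalization $\mathcal J_\chi=nE^\chi_\omega-\overline{\chi}E_\omega$ one has $\overline{\omega}=n$ and
\[
d\mathcal J_\omega|_u(\dot u)=\frac{n}{\operatorname{Vol}_\omega(X)}\int_X\dot u\,\bigl(\omega\wedge\omega_u^{n-1}-\omega_u^n\bigr),
\qquad
dJ|_u(\dot u)=\frac{1}{\operatorname{Vol}_\omega(X)}\int_X\dot u\,\bigl(\omega^n-\omega_u^n\bigr).
\]
So in fact $\mathcal J_\omega=I-J$, where $I(u)=\operatorname{Vol}_\omega(X)^{-1}\int_X u\,(\omega^n-\omega_u^n)$ is Aubin's $I$-functional. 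This agrees with $J$ only when $n=1$. For $n\ge 2$ the two are merely comparable ($n^{-1}J\le I-J\le nJ$), and comparability gives nothing for the difference $J(u^\beta)-J(u)$. Applying the Darvas--Zhang estimate with $\chi=\omega$ therefore controls $(I-J)(u^\beta)-(I-J)(u)$, not the $J$-term.

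Your fallback does not repair this. Jensen applied to $\operatorname{Vol}_\omega(X)^{-1}\int_X e^{\beta(u^\beta-u)}\omega^n=1$ yields only the one-sided integral bound $\int_X(u^\beta-u)\,\omega^n\le 0$, not a bound on $\sup_X(u^\beta-u)$. Indeed, for unbounded $u\in\cE^1$ the density $e^{\beta(u^\beta-u)}$ lies in every $L^p$ (zero Lelong numbers), so $u^\beta$ is bounded by Ko{\l}odziej's estimate and $\sup_X(u^\beta-u)=+\infty$. Your route to $d_1(u,u^\beta)=O(\beta^{-1})$ is therefore not available.

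The missing step is elementary and uses an identity you already wrote down. From the definition of $J$ and \eqref{eq:DZ-beta-energy-identity},
\[
J(u^\beta)-J(u)=\frac{1}{\operatorname{Vol}_\omega(X)}\int_X(u^\beta-u)\,\omega^n-\frac{1}{\beta}\Ent^\beta(u).
\]
The last term is harmless: $\Ent^\beta\ge 0$ (take $v=u$ in \eqref{eq:DZ-beta-entropy}) and $\beta^{-1}\le\beta^{-a}$ for $\beta>1$, so it is bounded by $\beta^{-a}\Ent^\beta(u)$. The first term is exactly the quantity controlled in Darvas--Zhang's Lemma~4.3, equation~(33), by $C\beta^{-a}\bigl(\Ent^\beta(u)+d_1(0,u)+1+\log\beta\bigr)$. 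This is the genuinely nontrivial input; Jensen supplies only its upper side.

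For the linear-growth bound, do not claim that $J$ and $\mathcal J_\chi$ are globally $d_1$-Lipschitz; that is false in general and not needed. Only the bound at the base point $0$ is required. For $\sup_X u=0$ it follows from $\pm\chi\le C\omega$, the estimate $\int_X(-u)\,\omega_u^j\wedge\omega^{n-j}\le (n+1)\operatorname{Vol}_\omega(X)\,d_1(0,u)$, and $0\le J(u)\le d_1(0,u)$.
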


\begin{proof}
By \cite[Proposition~4.2]{DarvasZhang2025},
\begin{equation}
\begin{split}
        \left|
        \mathcal J_{\Ric(\omega)}(u^\beta)
        -\mathcal J_{\Ric(\omega)}(u)
        \right|
        \leq C\beta^{-a}
        \bigl(
        \Ent^\beta(u)+d_1(0,u)+1+\log\beta
        \bigr).
\end{split}
\label{eq:DZ-Ricci-J-estimate}
\end{equation}
It remains to estimate the Aubin \(J\)-term.  Since
\[
J(u)=\frac{1}{\operatorname{Vol}_\omega(X)}\int_Xu\,\omega^n-E(u),
\]
identity \eqref{eq:DZ-beta-energy-identity} gives
\begin{equation}
\begin{split}
        J(u^\beta)-J(u)
        ={}&
        \frac{1}{\operatorname{Vol}_\omega(X)}
        \int_X(u^\beta-u)\,\omega^n
        -\frac1\beta\Ent^\beta(u).
\end{split}
\label{eq:DZ-J-quantization-difference}
\end{equation}
For the first term on the right-hand side of \eqref{eq:DZ-J-quantization-difference}, \cite[Lemma~4.3, equation~(33)]{DarvasZhang2025} gives directly
\begin{equation}
        \left|
        \frac{1}{\operatorname{Vol}_\omega(X)}
        \int_X(u^\beta-u)\,\omega^n
        \right|
        \leq
        C\beta^{-a}
        \bigl(
        \Ent^\beta(u)+d_1(0,u)+1+\log\beta
        \bigr),
        \qquad a=2^{-n}.
\label{eq:DZ-MA0-quantization-estimate}
\end{equation}
Since \(\beta^{-1}\leq\beta^{-a}\), identity \eqref{eq:DZ-J-quantization-difference} therefore yields
\[
        \left|J(u^\beta)-J(u)\right|
        \leq
        C\beta^{-a}
        \bigl(
        \Ent^\beta(u)+d_1(0,u)+1+\log\beta
        \bigr).
\]
Together with \eqref{eq:DZ-Ricci-J-estimate}, this proves \eqref{eq:DZ-shifted-lower-order-estimate}.

Finally, the estimate used in the proof of \cite[Theorem~4.4]{DarvasZhang2025}, originating from \cite[Proposition~2.5]{DH2017}, gives
\[
\lvert\mathcal J_{\Ric(\omega)}(u)\rvert
\leq C\,d_1(0,u).
\]
Also, the Aubin \(J\)-functional is nonnegative and has linear \(d_1\)-growth. This proves
\eqref{eq:DZ-lower-order-linear-growth}.
\end{proof}

\begin{proposition}
\label{prop:DZ-shifted-quantization}
The following are equivalent on \(\cE^1_0(X,\omega)\):
\begin{enumerate}[label=\textup{(\roman*)}]
\item \(F\) is \(d_1\)-coercive;
\item \(F^\beta\) is \(d_1\)-coercive for some \(\beta>0\);
\item \(F^\beta\) is \(d_1\)-coercive for every sufficiently large
      \(\beta\).
\end{enumerate}
Moreover, for every \(t\in\mathbb R\), the same equivalence holds after replacing \(F\) and \(F^\beta\) by \(F-tJ\) and \(F^\beta-tJ\), respectively.
\end{proposition}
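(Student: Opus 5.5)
The plan is to mimic the proof of \cite[Theorem~4.4]{DarvasZhang2025}, replacing \(\mathcal J_{\Ric(\omega)}\) by the shifted lower-order term \(B_s\). Since \(F\), \(F^\beta\) and \(J\) are invariant under addition of constants, \cref{lem:d1_coer_change} lets us work on \(\cE^1_{\sup}(X,\omega)\) instead of \(\cE^1_0(X,\omega)\). Fix \(t\in\R\) throughout and put \(G:=F-tJ=\Ent-(B_s+tJ)\) and \(G^\beta:=F^\beta-tJ=\Ent^\beta-(B_s+tJ)\). Then \(B_s+tJ\) satisfies the same two estimates as \(B_s\) in \cref{lem:DZ-shifted-lower-order}: the \(J\)-part was already handled in its proof, and \(J\) has linear \(d_1\)-growth. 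So it suffices to treat \(t=0\) with \(B_s\) replaced by \(B:=B_s+tJ\).

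\emph{(iii)\(\Rightarrow\)(ii)} is trivial.

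\emph{(ii)\(\Rightarrow\)(i).} I would use the comparison \(\Ent^\beta\le \Ent+C\) (or \(\Ent^\beta\le\Ent\) up to a \(\beta\)-dependent constant), which follows from the variational definition \eqref{eq:DZ-beta-entropy}. Indeed, the map \(v\mapsto -\log\frac{1}{V}\int e^{\beta(v-u)}\omega^n+\beta(E(v)-E(u))\) is bounded above by \(\Ent(u)\) plus a constant via the Legendre duality between entropy and \(\log\int e^{f}\). Combined with \(G^\beta\ge \delta d_1-C\), this gives \(G\ge G^\beta-C\ge\delta d_1-C'\).

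\emph{(i)\(\Rightarrow\)(iii)}, the main step. Assume \(G\ge\delta d_1(0,\cdot)-C\) on \(\cE^1_{\sup}\), and fix \(u\in\cE^1_{\sup}\) with \(\Ent^\beta(u)<\infty\). Let \(u^\beta\) solve \eqref{eq:DZ-quantization-equation}. Then \(\omega_{u^\beta}^n=e^{\beta(u^\beta-u)}\omega^n\), so \(\Ent(u^\beta)=\frac{\beta}{V}\int (u^\beta-u)\,\omega_{u^\beta}^n\). The aim is to compare this with \(\Ent^\beta(u)=\beta(E(u^\beta)-E(u))\). Using concavity of \(E\), namely \(E(u^\beta)-E(u)\ge \frac1V\int(u^\beta-u)\,\omega_{u^\beta}^n\), we get \(\Ent(u^\beta)\le\Ent^\beta(u)\). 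Applying coercivity at the point \(u^\beta\), renormalized by its supremum, gives
\[
\Ent^\beta(u)-B(u)\ \ge\ \Ent(u^\beta)-B(u^\beta)-\bigl|B(u^\beta)-B(u)\bigr| \ \ge\ \delta d_1(0,u^\beta)-C-\bigl|B(u^\beta)-B(u)\bigr| .
\]
By \cref{lem:DZ-shifted-lower-order}, the error is at most \(C\beta^{-a}(\Ent^\beta(u)+d_1(0,u)+1+\log\beta)\). We also need \(d_1(0,u^\beta)\ge d_1(0,u)-C\beta^{-a}(\cdots)\), i.e.\ control of \(d_1(u,u^\beta)\). I would obtain this from the estimates underlying \eqref{eq:DZ-MA0-quantization-estimate} together with \(\sup u^\beta\) bounds, exactly as in \cite[Lemma~4.3]{DarvasZhang2025}.

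Absorbing terms then finishes this direction. For \(\beta\) large, \(C\beta^{-a}\le\min(\delta/2,1/2)\), and the \(\Ent^\beta\) error is absorbed by writing \(\Ent^\beta(u)(1+C\beta^{-a})\) on the left, with \(\Ent^\beta\ge0\). The remaining error is \(C\beta^{-a}\log\beta\), which is bounded. This yields \(G^\beta(u)\ge\frac{\delta}{2}d_1(0,u)-C_\beta\).

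I expect the main obstacle to be the \(d_1(0,u^\beta)\) versus \(d_1(0,u)\) comparison and the sup-normalization of \(u^\beta\). If that estimate is not directly available, I would instead run the argument on the sum \(\Ent^\beta+\) the linear-growth bound \eqref{eq:DZ-lower-order-linear-growth}: first show that \(\Ent^\beta(u)\) large forces \(G^\beta\) large, and otherwise use \(d_1(u,u^\beta)\lesssim\beta^{-a}(\cdots)\) from Darvas--Zhang.
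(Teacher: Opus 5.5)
Your proposal is correct and follows the paper's proof essentially step by step. The shared ingredients are: treating $F-tJ$ by replacing $s$ with $s-t$; the inequality $\Ent(u^\beta)\le\Ent^\beta(u)$, which the paper quotes from \cite[Lemma~3.8]{DarvasZhang2025} and you rederive from the concavity of $E$; applying coercivity at $u^\beta-\sup_X u^\beta$; comparing $d_1$-distances via \cite[Proposition~4.1 and Lemma~4.3]{DarvasZhang2025}; and absorbing the $\eta_\beta\Ent^\beta$ error. One small correction: the absorption works because of the linear-growth bound $|B|\le C\,d_1(0,u)$, used through $\Ent^\beta=G^\beta+B$, and not because $\Ent^\beta\ge0$, since a lower bound on $(1+\eta)\Ent^\beta-B$ does not by itself give a lower bound on $\Ent^\beta-B$.
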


\begin{proof}
The implication \textup{(iii)}\(\Rightarrow\)\textup{(ii)} is immediate.  Since \(\Ent^\beta\leq\Ent\), one has \(F^\beta\leq F\). Therefore, \textup{(ii)} implies \textup{(i)}.  It remains to prove \textup{(i)}\(\Rightarrow\)\textup{(iii)}.

Assume that \(F\) is \(d_1\)-coercive on \(\cE^1_0(X,\omega)\).  By Lemma~\ref{lem:d1_coer_change}, there exist \(\delta,C_0>0\) such that
\begin{equation}
        F(u)
        \geq
        \delta d_1(0,u)-C_0,
        \qquad
        u\in\cE^1_{\sup}(X,\omega).
        \label{eq:DZ-F-coercive-sup-slice}
\end{equation}
Fix \(u\in\cE^1_{\sup}(X,\omega)\). By \cite[Lemma~3.8]{DarvasZhang2025} and Lemma~\ref{lem:DZ-shifted-lower-order}, we have
\begin{align*}
        F^\beta(u)
        &=\Ent^\beta(u)-B_s(u)\\
        &\geq
        F\left(u^\beta-\sup_Xu^\beta\right)
        +B_s(u^\beta)-B_s(u)\\
        &\geq
        \delta d_1\left(0,u^\beta-\sup_Xu^\beta\right)-C
        -C\beta^{-a}
        \bigl(
        \Ent^\beta(u)+d_1(0,u)+1+\log\beta
        \bigr).
\end{align*}
It follows from the triangle inequality, together with \cite[Proposition~4.1 and Lemma~4.3]{DarvasZhang2025} that
\[
        d_1\left(0,u^\beta-\sup_Xu^\beta\right)
        \geq
        d_1(0,u)-C
        -C\beta^{-a}
        \bigl(
        \Ent^\beta(u)+d_1(0,u)+1+\log\beta
        \bigr).
\]
Consequently, after changing \(C\),
\begin{equation}
        F^\beta(u)
        \geq
        \delta d_1(0,u)-C
        -
        \eta_\beta
        \bigl(
        \Ent^\beta(u)+d_1(0,u)+1+\log\beta
        \bigr),
\label{eq:DZ-before-absorption}
\end{equation}
where \(\eta_\beta=C\beta^{-a}\to0\).

Since \(\Ent^\beta=F^\beta+B_s\), \eqref{eq:DZ-lower-order-linear-growth} in Lemma \ref{lem:DZ-shifted-lower-order} allows us to absorb the entropy term in \eqref{eq:DZ-before-absorption}.  Hence, we obtain
\[
        (1+\eta_\beta)F^\beta(u)
        \geq
        (\delta-C\eta_\beta)d_1(0,u)
        -C-C\eta_\beta\log\beta.
\]
For all sufficiently large \(\beta\), one has \(\delta-C\eta_\beta>0\), while
\(\eta_\beta\log\beta\to0\). Thus \(F^\beta\) is \(d_1\)-coercive on \(\cE^1_{\sup}(X,\omega)\) for every sufficiently large \(\beta\).  By Lemma~\ref{lem:d1_coer_change},
\(F^\beta\) is therefore \(d_1\)-coercive on \(\cE^1_0(X,\omega)\).

The final assertion follows by replacing \(s\) throughout by \(s-t\).  Indeed,
\[
        F-tJ=M+(s-t)J,
        \qquad
        F^\beta-tJ=M^\beta+(s-t)J,
\]
and the preceding estimates remain valid for every fixed \(t\in\mathbb R\), with constants allowed to depend on \(t\).
\end{proof}

\subsection{K\"ahler test configurations and PSC K\"ahler metrics}

We first recall the transcendental notion of a K\"ahler test configuration, which was originally introduced in \cite[Definition~2.10]{DervanRoss2017} and \cite[Definitions~1.3, 3.2, and~3.5]{SjostromDyrefelt2018}. We use the precise formulation adopted in \cite[Definition~5.4]{DarvasZhang2025}.

\begin{definition} \label{def:DZ-Kahler-test-configuration}
A \emph{K\"ahler test configuration} for \((X,\alpha)\) is a pair \(\mathcal T=(\mathcal X,\mathcal A)\) with the following properties:
\begin{enumerate}[label=\textup{(\roman*)}]
\item \(\mathcal X\) is a normal compact complex space equipped with a flat surjective morphism \(\pi:\mathcal X\longrightarrow\Pj^1\) and a holomorphic \(\C^*\)-action lifting the standard action on \(\Pj^1\);

\item there is a \(\C^*\)-equivariant biholomorphism \(\mathcal X\setminus\mathcal X_0\simeq
X\times\bigl(\Pj^1\setminus\{0\}\bigr)\) over \(\Pj^1\setminus\{0\}\), where \(\mathcal X_0:=\pi^{-1}(0)\);

\item \(\mathcal A\in  H_{\mathrm{BC}}^{1,1}(\mathcal X,\R)\) is a \(\C^*\)-invariant Bott--Chern class whose restriction under the above trivialization is \(\mathcal A\big|_{\mathcal X\setminus\mathcal X_0}=p_1^*\alpha\), where \(p_1:X\times\Pj^1\to X\) is the first projection;

\item \(\mathcal A\) is relatively K\"ahler, namely \(\mathcal A+
              c\,\pi^*c_1\bigl(\mathcal O_{\Pj^1}(1)\bigr)\) is a K\"ahler class on \(\mathcal X\) for some \(c>0\).
\end{enumerate}
\end{definition}
Fixing \(\omega\in\alpha\), a K\"ahler test configuration determines an associated weak geodesic ray \(\ell^{\mathcal T}\); see \cite[Lemma~4.6]{SjostromDyrefelt2018} and \cite{CTW2017}. After adding a linear function of \(t\), we normalize the ray by \(\sup_Xu_t^{\mathcal T}=0\) with \(t\geq0\). For two finite-energy geodesic rays \(\ell=\{u_t\}\) and \(\widetilde\ell=\{\widetilde u_t\}\) starting from \(0\), their chordal distance is
\[
        d_1^c(\ell,\widetilde\ell)
        :=
        \lim_{t\to\infty}
        \frac{d_1(u_t,\widetilde u_t)}{t}.
\]
We keep the standard notation \(M\{\ell\}\), \(J\{\ell\}\), and \(F\{\ell\}\) for the radial slopes of the Mabuchi, Aubin \(J\), and \(F\)-functionals. For a finite-energy geodesic ray \(\ell=\{u_t\}_{t\geq0}\), set
\begin{equation}
\begin{split}
        M^\beta\{\ell\}
        &:=
        \liminf_{t\to\infty}
        \frac{M^\beta(u_t)}{t},\\
        F^\beta\{\ell\}
        &:=
        \liminf_{t\to\infty}
        \frac{F^\beta(u_t)}{t}
        =
        M^\beta\{\ell\}+sJ\{\ell\}.
\end{split}
\label{eq:DZ-quantized-radial-conventions}
\end{equation}
The second identity follows from \(F^\beta=M^\beta+sJ\) and the existence of the radial \(J\)-slope. Recall \eqref{eq:sup-slice-d1-J}, for a ray normalized by \(\sup_Xu_t=0\), one has
\[
        J(u_t)
        \leq d_1(0,u_t)
        \leq J(u_t)+C;
\]
Since \(\ell\) has constant \(d_1\)-speed, denoted by \(v_1(\ell)\), division by \(t\) gives the exact radial identity
\begin{equation}
        J\{\ell\}=v_1(\ell).
        \label{eq:DZ-radial-J-speed}
\end{equation}
The radial slopes are independent of the reference metric. Indeed, under the natural change of reference metrics, \(M^\beta-M=\Ent^\beta-\Ent\), and replacing one smooth reference volume form by another changes the right-hand side by a uniformly bounded quantity, which does not affect its radial slope. Moreover, the radial \(J\)-slope is likewise invariant by \eqref{eq:DZ-radial-J-speed}. 

For a K\"ahler test configuration \(\mathcal T\), define
\[
        F^\beta(\mathcal T)
        :=
        F^\beta\{\ell^{\mathcal T}\},
        \qquad
        J(\mathcal T)
        :=
        J\{\ell^{\mathcal T}\}.
\]
Compatible rays obtained from different auxiliary choices are parallel.  The \(d_1^c\)-continuity of the radial \(M^\beta\)- and \(J\)-functionals therefore shows that these quantities depend only on \(\mathcal T\); see
\cite[Theorem~5.2 and the proof of Theorem~1.1]
{DarvasZhang2025}.

\begin{definition}[Shifted \(K^\beta\)-stability]
\label{def:DZ-shifted-Kbeta-stability}
Let \(\beta>0\). We say that \((X,\alpha)\) is \emph{shifted \(K^\beta\)-stable} if there exists \(\delta>0\) such that \(F^\beta(\mathcal T) \geq\delta J(\mathcal T)\) for every K\"ahler test configuration \(\mathcal T\) for \((X,\alpha)\).
\end{definition}

\begin{theorem}
\label{thm:DZ-PSC-criterion}
Let \(X\) be a compact K\"ahler manifold and let \(\alpha\) be a K\"ahler class with average scalar curvature \(s>0\).  Then the following are equivalent:
\begin{enumerate}[label=\textup{(\roman*)}]
\item \(\alpha\) contains a positive scalar curvature K\"ahler metric;

\item \((X,\alpha)\) is shifted \(K^\beta\)-stable for some \(\beta>0\).
\end{enumerate}
\end{theorem}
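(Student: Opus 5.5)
We chain three equivalences. The first is the variational criterion of \cite{Sha2026PSCVariational}: \(\alpha\) contains a PSC K\"ahler metric if and only if \(F=M+sJ\) is \(d_1\)-coercive on \(\cE^1_0(X,\omega)\). This rests on \cite[Theorem A]{Sha2026PSCVariational} together with its coercivity/existence result for the prescribed scalar curvature measure equation with \(\Omega=s\omega^n\). The second is Proposition~\ref{prop:DZ-shifted-quantization}, which converts \(d_1\)-coercivity of \(F\) into \(d_1\)-coercivity of \(F^\beta\) for some (indeed all large) \(\beta\). The third is a Darvas--Zhang type geodesic stability statement for \(F^\beta\): \(F^\beta\) is \(d_1\)-coercive on \(\cE^1_0\) if and only if \(F^\beta(\mathcal T)\ge\delta J(\mathcal T)\) for all K\"ahler test configurations. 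It remains to justify this third step by adapting \cite[Theorem~1.1]{DarvasZhang2025} with \(M^\beta\) replaced by \(F^\beta=M^\beta+sJ\).

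\emph{Coercivity implies stability.} Suppose \(F^\beta\ge\delta d_1(0,\cdot)-C\) on \(\cE^1_0\). By Lemma~\ref{lem:d1_coer_change}, the same bound holds on the \(\sup\)-normalized slice after changing constants. Evaluate it along the ray \(\ell^{\mathcal T}=\{u_t\}\) normalized by \(\sup_X u_t=0\), divide by \(t\), and take \(\liminf\). By \eqref{eq:DZ-radial-J-speed}, \(d_1(0,u_t)/t\to v_1=J(\mathcal T)\), so \(F^\beta(\mathcal T)\ge\delta' J(\mathcal T)\).

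\emph{Stability implies coercivity.} Argue by contradiction in the style of Darvas--He and Darvas--Zhang. If \(F^\beta\) is not coercive, then for every \(\varepsilon>0\) the functional \(F^\beta-\varepsilon J\) fails to be coercive. Using geodesic convexity of \(M^\beta\) (the quantized entropy is convex by \cite{DarvasZhang2025}) and of \(J\)-type terms along finite-energy geodesics, the Darvas--He construction produces a unit-speed finite-energy geodesic ray \(\ell\) in \(\cE^1_{\sup}\) with \(F^\beta\{\ell\}\le\varepsilon J\{\ell\}\) and \(J\{\ell\}=1\). The key input is the compactness of sublevel sets where \(\Ent^\beta\) is bounded, together with lower semicontinuity. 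Two points need care here: \(J\) is not convex, but \(sJ\) differs from a geodesically linear/convex term by controlled quantities, and only radial slopes matter; also, \(B_s\) is \(d_1\)-Lipschitz by \eqref{eq:DZ-lower-order-linear-growth} and its bounds. Finally, approximate \(\ell\) in the chordal metric \(d_1^c\) by rays \(\ell^{\mathcal T_j}\) of K\"ahler (indeed ample/deformation-to-normal-cone) test configurations, using the \(\beta\)-quantized approximation of \cite[Theorem~5.2]{DarvasZhang2025}. This gives \(F^\beta(\mathcal T_j)\to F^\beta\{\ell\}\) and \(J(\mathcal T_j)\to1\). That yields \(F^\beta(\mathcal T_j)<\delta J(\mathcal T_j)\) for small \(\varepsilon\), contradicting stability.

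The main obstacle is this last approximation step. Darvas--Zhang prove \(d_1^c\)-continuity of the radial \(M^\beta\) along approximating test configurations, which is precisely what quantization buys. For \(F^\beta\) one additionally needs \(d_1^c\)-continuity of the radial \(J\)-slope, and this follows from \eqref{eq:DZ-radial-J-speed} because \(v_1\) is \(d_1^c\)-Lipschitz. Since \(F^\beta\{\ell\}=M^\beta\{\ell\}+sJ\{\ell\}\), the argument of \cite{DarvasZhang2025} then transfers with \(s\) playing the role of the shift \(t\) from Proposition~\ref{prop:DZ-shifted-quantization}.

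Chaining the three steps gives (i)\(\iff F\) coercive \(\iff F^\beta\) coercive for some \(\beta\) \(\iff\) (ii).
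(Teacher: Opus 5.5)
Your (i)$\Rightarrow$(ii) is the paper's argument. The gap is in (ii)$\Rightarrow$(i), where you try to prove that $F^\beta$ itself is $d_1$-coercive by a Darvas--He ray extraction. That extraction needs two properties of $F^\beta$: geodesic convexity, and compactness of $\{F^\beta\le C\}\cap\{d_1(0,\cdot)\le R\}$.

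Neither is available. The paper only ever uses convexity of $F$, not of $F^\beta$. The supremum formula \eqref{eq:DZ-beta-entropy} does not yield convexity, because $t\mapsto-\log\int_Xe^{\beta(v-u_t)}\omega^n$ is not convex along geodesics in general. The compactness is actually false. Applying Jensen's inequality in \eqref{eq:DZ-beta-entropy}, and taking $v=u$ for the lower bound, gives
\[
0\le\Ent^\beta(u)\le\beta J(u).
\]
Hence $F^\beta=\Ent^\beta-B_s$ is bounded on every $d_1$-ball, and its sublevel sets contain whole $d_1$-balls of $\cE^1_0(X,\omega)$, which are not compact. A bound on $\Ent^\beta(u)$ controls $\Ent(u^\beta)$, not $u$, so no limiting ray can be extracted. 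Two smaller points: $J$ is in fact convex along geodesics, and \eqref{eq:DZ-lower-order-linear-growth} is only a growth bound, not a Lipschitz bound.

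Even granting a ray $\ell$ with $F^\beta\{\ell\}\le\varepsilon J\{\ell\}$, your last step fails. \cite[Theorem~5.2]{DarvasZhang2025} is the $d_1^c$-continuity of $M^\beta\{\cdot\}$, not an approximation theorem. The approximation of a finite-energy ray by K\"ahler test-configuration rays (via \cite{MesquitaPiccione2024}, as used in \cite{DarvasZhang2025}) requires $M\{\ell\}<+\infty$. By the bound above, $F^\beta\{\ell\}$ is finite for every finite-energy ray, and $M^\beta\le M$. So smallness of $F^\beta\{\ell\}$ gives no control on $M\{\ell\}$, and $\ell$ need not be approximable.

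The paper avoids both problems by never proving coercivity of $F^\beta$. Take a $\sup$-normalized ray $\ell$ with $F\{\ell\}<+\infty$. Then $M\{\ell\}=F\{\ell\}-sJ\{\ell\}<+\infty$, so $\ell$ is a $d_1^c$-limit of rays $\ell^{\mathcal T_j}$. The $d_1^c$-continuity of $M^\beta\{\cdot\}$ and $J\{\cdot\}$ then gives $F^\beta\{\ell\}\ge\delta J\{\ell\}$. The comparison $F^\beta\le F$ upgrades this to $F\{\ell\}\ge\delta J\{\ell\}$, which is uniform geodesic stability of $F$. The ray extraction is then carried out for $F$, which is convex with compact sublevel sets, inside \cite[Theorem~A]{Sha2026PSCVariational}.

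Consequently the fixed-$\beta$ equivalence ``$F^\beta$ coercive $\iff$ shifted $K^\beta$-stable'' that your chain relies on is not established. One only obtains coercivity of $F^{\beta'}$ for large $\beta'$ a posteriori, via Proposition~\ref{prop:DZ-shifted-quantization}.
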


\begin{proof}
Suppose first that \(\alpha\) contains a positive scalar curvature K\"ahler metric.  By \cite[Theorem A]{Sha2026PSCVariational}, \(F\) is \(d_1\)-coercive on \(\cE^1_0(X,\omega)\). Proposition \ref{prop:DZ-shifted-quantization} then implies that \(F^\beta\) is \(d_1\)-coercive for every sufficiently large \(\beta\) on \(\cE^1_0(X,\omega)\). Fix one such \(\beta>0\). By Lemma~\ref{lem:d1_coer_change}, there exist \(\varepsilon>0\) and \(C>0\) such that
\begin{equation}
        F^\beta(u)
        \geq
        \varepsilon d_1(0,u)-C,
        \qquad
        u\in\cE^1_{\sup}(X,\omega).
        \label{eq:DZ-Fbeta-coercive-ray-bound}
\end{equation}

Let \(\mathcal T\) be a K\"ahler test configuration and normalize its compatible ray \(\ell^{\mathcal T}=\{u_t^{\mathcal T}\}\) by \(\sup_Xu_t^{\mathcal T}=0\).  Dividing
\eqref{eq:DZ-Fbeta-coercive-ray-bound} by \(t\) and letting \(t\to\infty\), we obtain
\[
\begin{split}
        F^\beta(\mathcal T)
        \geq
        \varepsilon
        \lim_{t\to\infty}
        \frac{d_1(0,u_t^{\mathcal T})}{t}  
        =
        \varepsilon v_1(\ell^{\mathcal T})
        =
        \varepsilon J(\mathcal T),
\end{split}
\]
where the last equality is \eqref{eq:DZ-radial-J-speed}.  Hence \((X,\alpha)\) is shifted
\(K^\beta\)-stable.

Conversely, suppose that \((X,\alpha)\) is shifted \(K^\beta\)-stable for some fixed \(\beta>0\).  Let \(\ell=\{u_t\}_{t\geq0}\) be a nonconstant finite-energy geodesic
ray, normalized by \(\sup_Xu_t=0\). If \(F\{\ell\}=+\infty\), then there is nothing to prove.  Suppose that \(F\{\ell\}<+\infty\).  Since \(J\{\ell\}=v_1(\ell)<+\infty\), it follows from
\(F=M+sJ\) that
\[
        M\{\ell\}
        =
        F\{\ell\}-sJ\{\ell\}
        <+\infty.
\]
By the statement used in pvoving \cite[Theorem~5.5]{DarvasZhang2025}, obtained there by combining \cite[Proposition~6.3.1 and Theorem~5.1.7]
{MesquitaPiccione2024}, every finite-energy geodesic ray with bounded Mabuchi slope is \(d_1^c\)-approximable by rays of K\"ahler test configurations. Hence there exist K\"ahler test configurations \(\mathcal T_j\), with compatible rays \(\ell_j:=\ell^{\mathcal T_j}\), such that
\[
        d_1^c\bigl(
        \ell^{\mathcal T_j},\ell
        \bigr)
        \longrightarrow0.
\]
For every \(j\), shifted \(K^\beta\)-stability gives
\[
        F^\beta\{\ell^{\mathcal T_j}\}
        \geq
        \delta J\{\ell^{\mathcal T_j}\}.
\]
By \cite[Theorem~5.2]{DarvasZhang2025}, \(M^\beta\{\cdot\}\) is \(d_1^c\)-continuous. Moreover, \(J\{\cdot\}\) is also \(d_1^c\)-continuous, as follows from \cite[Lemma~4.15]{Darvas2015} and \cite[Lemma~A.2]{BermanBoucksomJonsson2021}. Consequently, \(F^\beta\{\cdot\}\) is \(d_1^c\)-continuous.  We may therefore pass to the limit and obtain
\begin{equation}
        F^\beta\{\ell\}
        \geq
        \delta J\{\ell\}.
        \label{eq:DZ-Fbeta-all-rays}
\end{equation}

Since \(F^\beta\leq F\), taking radial slopes and using
\eqref{eq:DZ-Fbeta-all-rays}, we obtain
\[
        F\{\ell\}
        \geq
        \delta J\{\ell\}
\]
for every nonconstant finite-energy geodesic ray normalized by \(\sup \ell_t =0\). Let
\(\widetilde\ell=\{\widetilde u_t\}_{t\geq0}\subset\cE^1_0(X,\omega)\) be a constant-speed geodesic ray starting from \(0\). By \cite[Theorem~3.4]{Darvas2017WeakGeodesics},
\(t\mapsto\sup_X\widetilde u_t\) is linear. Hence \(u_t:=\widetilde u_t-\sup_X\widetilde u_t\) is again a geodesic ray and satisfies \(\sup_Xu_t=0\). Since \(F\) and \(J\) are invariant under addition of constants,
\[
        F\{\widetilde\ell\}
        =
        F\{\ell\},
        \qquad
        J\{\widetilde\ell\}
        =
        J\{\ell\}.
\]
Combining with \eqref{eq:energy-slice-d1-J}, we therefore obtain
\[
        F\{\widetilde\ell\}
        \geq
        \delta C^{-1}v_1(\widetilde\ell).
\]
Thus \(F\) is uniformly geodesically stable. By \cite[Theorem~A]{Sha2026PSCVariational}, \(\alpha\) contains a positive scalar curvature K\"ahler metric.
\end{proof}

The PSC threshold introduced in \(\S\)\ref{subsec:intro-main-results} is 
\begin{equation}
        \sigma(X,\alpha)
        :=
        \sup_{\beta>0}
        \inf_{J(\mathcal T)>0}
        \frac{F^\beta(\mathcal T)}
             {J(\mathcal T)}.
        \label{eq:DZ-threshold}
\end{equation}
The two distinguished numerical walls for \(\sigma\) are located at \(0\) and \(s\): the first detects positive scalar curvature, while the second is obtained by removing the fixed shift \(sJ\) from \(F^\beta=M^\beta+sJ\) and recovering the quantized Mabuchi functional.

\begin{corollary}
\label{cor:DZ-two-walls}
Let \(X\) be a compact K\"ahler manifold, and let \(\alpha\) be a K\"ahler class with positive average scalar curvature \(s>0\). Then:
\begin{enumerate}[label=\textup{(\roman*)}]
\item \(\alpha\) contains a PSC K\"ahler metric if and only if \(\sigma(X,\alpha)>0\);

\item \(\alpha\) contains a unique cscK metric if and only if \(\sigma(X,\alpha)>s\).
\end{enumerate}
\end{corollary}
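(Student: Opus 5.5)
Both parts follow by translating the threshold into $d_1$-coercivity statements through Proposition~\ref{prop:DZ-shifted-quantization} (the version with $F-tJ$) and the argument of Theorem~\ref{thm:DZ-PSC-criterion}, run with a shift $t$. The key observation is the following claim, for each $t\in\mathbb R$: the inequality $\sigma(X,\alpha)>t$ holds if and only if $F-tJ=M+(s-t)J$ is $d_1$-coercive on $\cE^1_0(X,\omega)$.

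\textbf{From coercivity to the threshold.} Assume $F-tJ$ is coercive. By Proposition~\ref{prop:DZ-shifted-quantization}, $F^\beta-tJ$ is coercive for some large $\beta$. Lemma~\ref{lem:d1_coer_change} turns this into the bound
\[
F^\beta(u)-tJ(u)\ge\varepsilon d_1(0,u)-C \qquad\text{on } \cE^1_{\sup}(X,\omega).
\]
Divide along sup-normalized test-configuration rays and use \eqref{eq:DZ-radial-J-speed}. This gives $F^\beta(\mathcal T)\ge (t+\varepsilon)J(\mathcal T)$, hence $\sigma\ge t+\varepsilon>t$.

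\textbf{From the threshold to coercivity.} Assume $\sigma>t$. Pick $\beta$ and $\delta>0$ with
\[
F^\beta(\mathcal T)\ge (t+\delta)J(\mathcal T) \qquad\text{for all } \mathcal T \text{ with } J(\mathcal T)>0.
\]
When $J(\mathcal T)=0$ the ray is constant, so its slopes vanish and the inequality holds trivially. Now repeat the converse half of the proof of Theorem~\ref{thm:DZ-PSC-criterion}. Rays with finite $F$-slope also have finite $M$-slope, so they are $d_1^c$-approximable by test-configuration rays. The functionals $M^\beta\{\cdot\}$ and $J\{\cdot\}$ are $d_1^c$-continuous. Together with $F^\beta\le F$, this gives $(F-tJ)\{\ell\}\ge\delta J\{\ell\}$ for every sup-normalized ray, and hence for every ray in $\cE^1_0(X,\omega)$ after renormalizing. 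It remains to convert this uniform geodesic stability of $M+(s-t)J$ into $d_1$-coercivity. I would use the Darvas--Rubinstein/Berman--Darvas--Lu mechanism for convex, lower semicontinuous functionals on $\cE^1_0$ that satisfy the compactness property (entropy control from a bound on $M$). Concretely: if coercivity fails, take a sequence with $(F-tJ)(u_j)\le\varepsilon d_1(0,u_j)$ and $d_1(0,u_j)\to\infty$; join $0$ to each $u_j$ by a geodesic, use convexity, and extract a limit ray of slope $\le\varepsilon$ by compactness of sublevel sets. For $t<s$ this convergence argument should be exactly the one in \cite[Theorem~A]{Sha2026PSCVariational}, with $s$ replaced by $s-t$. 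For $t\ge s$ it is the standard one for $M+cJ$ with $c\le0$, as in Darvas--Zhang. This step is the main obstacle: the general-$t$ geodesic-stability-to-coercivity principle has to be checked, including the sign of the $J$-term in the compactness step.

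\textbf{Part (i).} Take $t=0$. By \cite[Theorem~A]{Sha2026PSCVariational}, $\alpha$ contains a PSC metric if and only if $F$ is coercive, so the claim gives (i). Alternatively, (i) follows directly from Theorem~\ref{thm:DZ-PSC-criterion}: shifted $K^\beta$-stability for some $\beta$ is exactly the existence of $\beta$ and $\delta>0$ with infimum at least $\delta$, which is the statement $\sigma>0$.

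\textbf{Part (ii).} Take $t=s$. Then $\sigma>s$ if and only if $M$ is $d_1$-coercive on $\cE^1_0(X,\omega)$, that is, $M\ge\varepsilon J-C$ on $\cE^1/\mathbb R$. By the Darvas--Rubinstein/Berman--Darvas--Lu/Chen--Cheng resolution recalled in the introduction, this is equivalent to existence of a cscK metric together with triviality of $\Aut^0$. A cscK metric is unique exactly when $\Aut^0$ is trivial, so this is equivalent to $\alpha$ containing a unique cscK metric. Uniqueness here means uniqueness as a metric. If $\Aut^0$ were nontrivial, the orbit would produce distinct cscK metrics, and coercivity would fail along it, since $M$ is constant there while $J$ is unbounded.
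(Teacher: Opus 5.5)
Your argument is correct and is essentially the paper's. Part~(i) is Theorem~\ref{thm:DZ-PSC-criterion} read through the definition of \(\sigma\); in part~(ii) you re-derive \cite[Theorem~5.5]{DarvasZhang2025} from its ingredients (Proposition~\ref{prop:DZ-shifted-quantization} with \(t=s\), \(d_1^c\)-approximation of rays with finite Mabuchi slope, uniform geodesic stability implying coercivity of \(M\), and the properness theorem), whereas the paper cites that theorem directly after noting \(F^\beta-sJ=M^\beta\).

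The obstacle you flag does not arise for this corollary, since only \(t=0\) and \(t=s\) are used, and there \(M+(s-t)J\) is geodesically convex. For general \(t\) one compares with \(d_1(0,\cdot)=-E\) instead of \(J\); this is affine along geodesics in \(\cE^1_{\sup}(X,\omega)\), as in Proposition~\ref{prop:strict-level-comparison}.

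One small correction to part~(ii). In the transcendental setting, coercivity of \(M\) is equivalent to existence of a cscK metric with trivial reduced automorphism group, that is, to a unique cscK metric. It is not equivalent to triviality of \(\Aut^0\), since for instance torus translations can act by isometries of a unique cscK metric. Your two intermediate equivalences are each off in that case, but the composite equivalence you actually use is correct.
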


\begin{proof}
By the definition of \(\sigma(X,\alpha)\), the inequality \(\sigma(X,\alpha)>0\) is equivalent to shifted \(K^\beta\)-stability for some \(\beta>0\).  The first equivalence therefore follows from Theorem \ref{thm:DZ-PSC-criterion}.

For the second equivalence, \( \sigma(X,\alpha)>s\) if and only if there exist \(\beta>0\) and \(\delta>0\) such that for every K\"ahler test configuration, we have
\begin{equation*}
F^\beta(\mathcal T)\geq(s+\delta)J(\mathcal T),
\end{equation*}
which is equivalent to
\[
        M^\beta(\mathcal T)
        \geq\delta J(\mathcal T)
\]
for some \(\beta>0\), some \(\delta>0\), and every K\"ahler test
configuration.  By \cite[Theorem~5.5]{DarvasZhang2025}, this is equivalent to the
existence of a unique cscK metric in the class.
\end{proof}

By combining Theorem \ref{thm:DZ-PSC-criterion} and Corollary \ref{cor:DZ-two-walls}, we obtain Theorem \ref{thm:main-DZ}.

\section{PSC K\"ahler metrics in non-Archimedean geometry}
\label{sec:boucksom-jonsson}

Let \((X,L)\) be a polarized manifold such that \(\alpha=2\pi c_1(L)\) has positive average scalar curvature \(s>0\). The \(F\)-functional admits an intrinsic non-Archimedean counterpart. The purpose of this section is to express the preceding positive scalar curvature criterion in the polarized setting.

\subsection{The non-Archimedean \(F\)-functional}

We follow the notation of \cite{BoucksomJonsson2025YTD}. Fix the ample line bundle \(L\). The space \(\cH_{\mathrm{na}}\) of Fubini--Study potentials is canonically identified with the isomorphism classes of normal relatively ample test configurations
for \((X,L)\).  Equivalently, in the terminology of \cite{BoucksomHisamotoJonsson2017}, it is the quotient of semiample test configurations by pullback equivalence, in the case that each equivalence class admits a unique normal ample representative. The \(d_{1,\mathrm{na}}\)-completion of \(\cH_{\mathrm{na}}\) is the finite-energy space
\(\cE^1_{\mathrm{na}}\). The non-Archimedean Mabuchi functional \(M_{\mathrm{na}}\colon\cH_{\mathrm{na}}\to\mathbb Q\) admits a natural lower semicontinuous extension \(M_{\mathrm{na}}\colon\cE^1_{\mathrm{na}} \rightarrow\mathbb R\cup\{+\infty\}\), denoted by the same symbol. Write \(J_{\mathrm{na}}\) as the non-Archimedean Aubin \(J\)-functional. We then define
\begin{equation}
        F_{\mathrm{na}}
        :=
        M_{\mathrm{na}}
        +
        sJ_{\mathrm{na}}
        \quad\text{on }\cE^1_{\mathrm{na}}.
        \label{eq:BJ-NA-F-definition}
\end{equation}

Let \(\cE^1_{\mathrm{rad}}\) denote the space of finite-energy radial directions, namely the space of equivalence classes of finite-energy geodesic rays in \(\cE^1(X,\omega)\) modulo parallelism. If \(\varphi=[\ell]\in\cE^1_{\mathrm{rad}}\), where \(\ell=\{u_t\}_{t\geq0}\), the radial slope \(F\{\ell\}\) depends only on the direction
\(\varphi\), and not on the representative \(\ell\); see \cite[Proposition~2.13]{BoucksomJonsson2025YTD}.

Every finite-energy geodesic ray determines a non-Archimedean potential. Conversely, for each \(\psi\in\cE^1_{\mathrm{na}}\) and each smooth reference potential,
there exists a unique maximal geodesic ray \(\ell_\psi\) having
non-Archimedean potential \(\psi\). Taking its direction defines an
\(\R_{>0}\)-equivariant isometric embedding
\begin{equation}
        \iota:
        (\cE^1_{\mathrm{na}},d_{1,\mathrm{na}})
        \hookrightarrow
        (\cE^1_{\mathrm{rad}},d_{1,\mathrm{rad}}),
        \qquad
        \psi\longmapsto[\ell_\psi].
        \label{eq:BJ-NA-radial-embedding}
\end{equation}
See \cite[\S5.1]{BoucksomJonsson2025YTD}. We henceforth identify \(\cE^1_{\mathrm{na}}\) with its closed image in \(\cE^1_{\mathrm{rad}}\). Let \(\varphi=[\ell]\in\cE^1_{\mathrm{rad}}\). By \cite[Theorem~C]{BoucksomJonsson2025YTD}, together with the identity \(J\{\ell\}=J_{\mathrm{na}}(\varphi)\) on \(\cE^1_{\mathrm{na}}\) (see \cite[Lemma~4.3, Theorem~6.6 and Corollary~6.7]{BermanBoucksomJonsson2021}), one has
\begin{equation}
        F\{\ell\}
        =
        \begin{cases}
        F_{\mathrm{na}}(\varphi),
        &\varphi\in\cE^1_{\mathrm{na}},\\
        +\infty,
        &\varphi\notin\cE^1_{\mathrm{na}}.
        \end{cases}
        \qquad\text{for }
       [\ell]= \varphi\in\cE^1_{\mathrm{rad}}.
        \label{eq:BJ-F-radial-identity}
\end{equation}

For \(\varphi\in\cE^1_{\mathrm{na}}\), after choosing the unique translate satisfying \(E_{\mathrm{na}}(\varphi)=0\), there exists \(C\geq1\), depending only on the dimension, such that
\begin{equation}
        C^{-1}J_{\mathrm{na}}(\varphi)
        \leq
        d_{1,\mathrm{na}}(0,\varphi)
        =
        d_{1,\mathrm{rad}}(0,\varphi)
        \leq
        C J_{\mathrm{na}}(\varphi).
        \label{eq:BJ-J-d1-comparison}
\end{equation}
The comparison between \(d_{1,\mathrm{na}}\) and \(J_{\mathrm{na}}\) follows from  \cite[Lemma~5.6 and Theorem~5.5]{BJ2025AIF}, while the equality with the radial metric follows from \cite[\S5.1]{BoucksomJonsson2025YTD}.

\begin{proposition}
\label{prop:BJ-F-compatibility}
Let \((X,L)\) be a polarized manifold such that \(\alpha=2\pi c_1(L)\) has positive average scalar curvature \(s>0\), and fix \(\omega\in\alpha\). Then the following are
equivalent:
\begin{enumerate}[label=\textup{(\roman*)}]
\item
\(F\) is \(d_1\)-coercive on \(\cE^1_0(X,\omega)\);

\item
\(F_{\mathrm{na}}(\varphi)>0\) for every \(\varphi\in\cE^1_{\mathrm{na}}\) such that \(J_{\mathrm{na}}(\varphi)>0\);

\item
there exists \(\delta>0\) such that \(F_{\mathrm{na}}(\varphi)\geq\delta J_{\mathrm{na}}(\varphi)\) for every \(\varphi\in\cE^1_{\mathrm{na}}\).
\end{enumerate}
\end{proposition}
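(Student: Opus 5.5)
I will prove the cycle (i)$\Rightarrow$(iii)$\Rightarrow$(ii)$\Rightarrow$(i), with every step routed through the radial identity \eqref{eq:BJ-F-radial-identity} and the embedding \eqref{eq:BJ-NA-radial-embedding}. The implication (iii)$\Rightarrow$(ii) is trivial.

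For (i)$\Rightarrow$(iii), I will assume $F$ is $d_1$-coercive on $\cE^1_0(X,\omega)$. By Lemma~\ref{lem:d1_coer_change}, this gives $F(u)\geq\varepsilon d_1(0,u)-C$ on $\cE^1_{\sup}(X,\omega)$. Take $\varphi\in\cE^1_{\mathrm{na}}$ and let $\ell_\varphi=\{u_t\}$ be its maximal geodesic ray from $0$. By \cite[Theorem~3.4]{Darvas2017WeakGeodesics}, $\sup_X u_t$ is linear, so I can normalize $\sup_Xu_t=0$; this changes neither $F$ nor $J$ and does not change the direction's $J$-slope. Dividing by $t$ and using \eqref{eq:DZ-radial-J-speed}, I get $F\{\ell_\varphi\}\geq\varepsilon v_1(\ell_\varphi)=\varepsilon J\{\ell_\varphi\}$. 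By \eqref{eq:BJ-F-radial-identity} and $J\{\ell\}=J_{\mathrm{na}}(\varphi)$, this is $F_{\mathrm{na}}(\varphi)\geq\varepsilon J_{\mathrm{na}}(\varphi)$. The same argument as in the first half of Theorem~\ref{thm:DZ-PSC-criterion} applies.

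The main step is (ii)$\Rightarrow$(i): upgrading pointwise positivity to uniform positivity. I plan to argue by contradiction using compactness together with homogeneity. Suppose $F$ is not $d_1$-coercive. By the geodesic-stability characterization in \cite[Theorem~A]{Sha2026PSCVariational} (the converse direction used in Theorem~\ref{thm:DZ-PSC-criterion}), failure of coercivity should produce a nonconstant finite-energy geodesic ray $\ell$ in $\cE^1_0$ with $F\{\ell\}\leq 0$. I expect this to follow the Darvas--He/Berman--Darvas--Lu destabilizing-ray construction, since $F$ is convex and lower semicontinuous along geodesics. I would take this as part of the cited variational framework: coercivity is equivalent to $F\{\ell\}>0$, or at least $\geq0$ with strictness, for every nonconstant ray.

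If $\varphi=[\ell]\notin\cE^1_{\mathrm{na}}$, then \eqref{eq:BJ-F-radial-identity} gives $F\{\ell\}=+\infty$, a contradiction. So $\varphi\in\cE^1_{\mathrm{na}}$. Since $\ell$ is nonconstant and normalized by $E=0$, \eqref{eq:BJ-J-d1-comparison} gives $J_{\mathrm{na}}(\varphi)>0$, and then $F_{\mathrm{na}}(\varphi)=F\{\ell\}\leq0$ contradicts (ii).

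The obstacle is making sure the cited criterion delivers a ray with slope $\leq0$, rather than only $\leq\delta v_1$ for every $\delta$. If only the weak version is available, I would use that $F_{\mathrm{na}}$ is lower semicontinuous and $\R_{>0}$-homogeneous. Then I would take a minimizing sequence $\varphi_j$ for $F_{\mathrm{na}}/J_{\mathrm{na}}$, normalized by $E_{\mathrm{na}}=0$ and $J_{\mathrm{na}}=1$. Its Mabuchi values are bounded above, hence the entropy is bounded, and I would extract a strongly convergent subsequence by the compactness of entropy sublevel sets in $\cE^1_{\mathrm{na}}$ \cite{BoucksomJonsson2025YTD}. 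Continuity of $J_{\mathrm{na}}$ keeps $J_{\mathrm{na}}=1$ in the limit, and lower semicontinuity gives a limit $\varphi$ with $F_{\mathrm{na}}(\varphi)\leq0$, contradicting (ii). This compactness step yields (iii) directly, and (iii)$\Rightarrow$(i) then follows from the contrapositive of the first argument via the ray criterion of \cite[Theorem~A]{Sha2026PSCVariational}.
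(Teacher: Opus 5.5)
Your (i)$\Rightarrow$(iii) and (iii)$\Rightarrow$(ii) are fine. Using sup-normalization with the exact identity $J\{\ell\}=v_1(\ell)$ is a harmless variant of the paper's $E$-normalization combined with \eqref{eq:BJ-J-d1-comparison}.

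The gap is in (ii)$\Rightarrow$(i), at exactly the point you flag. Your primary route is the paper's, but the key step is assumed rather than proved.

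\begin{itemize}
\item As invoked in this paper, \cite[Theorem~A]{Sha2026PSCVariational} only identifies coercivity with \emph{uniform} geodesic stability. Its contrapositive gives, for each $\delta>0$, a ray with $F\{\ell\}<\delta v_1(\ell)$, which does not contradict (ii).
\item Convexity and lower semicontinuity alone do not produce a destabilizing ray. The missing input is that $\{F\le C\}\cap\{d_1(0,\cdot)\le R\}$ is compact in $\cE^1_0(X,\omega)$ \cite[Proposition~4.1]{Sha2026PSCVariational}. This makes $F$ strongly lower semicontinuous in the Boucksom--Jonsson sense, and \cite[Corollary~2.28]{BoucksomJonsson2025YTD} then gives a ray along which $F$ is nonincreasing, so the slope is $\le 0$ outright.
\end{itemize}

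Concretely, if $F$ is not coercive:
\begin{enumerate}
\item Pick $u_j\in\cE^1_0$ with $F(u_j)\le j^{-1}d_1(0,u_j)-j$.
\item Since $F$ is bounded below on $d_1$-balls, $d_j:=d_1(0,u_j)\to\infty$.
\item Convexity along the unit-speed segment from $0$, where $F(0)=0$, gives $F(u_{j,t})\le (t/d_j)F(u_j)\le t/j$.
\item Compactness and a diagonal argument give a unit-speed limit ray, and lower semicontinuity gives $F(u_t)\le 0$ for all $t$.
\end{enumerate}
From there your last step is exactly the paper's: finite slope forces $\varphi\in\cE^1_{\mathrm{na}}$, and $J_{\mathrm{na}}(\varphi)\ge C^{-1}$ by \eqref{eq:BJ-J-d1-comparison}.

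Your fallback does not close the gap. It hinges on strong compactness, in $\cE^1_{\mathrm{na}}$, of sets with bounded entropy and bounded $J_{\mathrm{na}}$. You attribute this to Boucksom--Jonsson without a precise statement, and it is not an input the paper uses.

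With only weak compactness, $J_{\mathrm{na}}$ is merely lower semicontinuous: $\sup\varphi$ passes to the limit, but $E_{\mathrm{na}}$ is only upper semicontinuous. So the normalization $J_{\mathrm{na}}(\varphi_j)=1$ need not survive, the limit may have $J_{\mathrm{na}}(\varphi)=0$, and (ii) gives no contradiction. Likewise, lower semicontinuity of $F_{\mathrm{na}}$ is a strong-topology statement.

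The point of the paper's argument is that all compactness is taken on the Archimedean side, where it is available. It is then transported to $\cE^1_{\mathrm{na}}$ through the radial identity \eqref{eq:BJ-F-radial-identity}.
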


\begin{proof}
We first verify that \(F\) is strongly lower semicontinuous in the sense of
\cite[Definition~2.20]{BoucksomJonsson2025YTD}. In particular, \cite[\S2.5]{Sha2026PSCVariational} shows that \(F\) is \(d_1\)-lower semicontinuous and geodesically convex on \(\cE^1_0(X,\omega)\). Moreover, \cite[Proposition~4.1]{Sha2026PSCVariational} shows that the intersection of every \(F\)-sublevel set with a closed \(d_1\)-ball is relatively compact. Such an intersection is closed by the lower semicontinuity of \(F\), and hence compact.

Assume first that \textup{(i)} holds. Let
\(\varphi\in\cE^1_{\mathrm{na}}\), normalized by
\(E_{\mathrm{na}}(\varphi)=0\), and let
\(\ell_\varphi=\{u_t\}_{t\geq0}\) be the maximal geodesic ray
directed by \(\varphi\) and emanating from \(0\). By
\cite[Corollary~6.7]{BermanBoucksomJonsson2021},
\[
        E(u_t)
        =
        E(0)+tE_{\mathrm{na}}(\varphi)
        =
        0.
\]
Thus \(\ell_\varphi\subset\cE^1_0(X,\omega)\), and coercivity gives
\[
        F(u_t)\geq\delta\, d_1(0,u_t)-C_0.
\]
Dividing by \(t\) and letting \(t\to\infty\), we obtain
\[
\begin{split}
        F_{\mathrm{na}}(\varphi)
        =
        F\{\ell_\varphi\}
        \geq
        \delta \, d_{1,\mathrm{rad}}(0,\varphi)
        =
        \delta \, d_{1,\mathrm{na}}(0,\varphi)
        \geq
        \frac{\delta}{C}
        J_{\mathrm{na}}(\varphi),
\end{split}
\]
where we used \eqref{eq:BJ-F-radial-identity} and \eqref{eq:BJ-J-d1-comparison}. Hence \textup{(iii)} holds. The implication \(\textup{(iii)}\Rightarrow\textup{(ii)}\) is immediate.

Finally, assume \textup{(ii)} and suppose that \textup{(i)} fails. Applying \cite[Corollary~2.28]{BoucksomJonsson2025YTD} to \(\cE^1_0(X,\omega)\), with the trivial group action, gives a unit-speed geodesic ray \(\ell=\{u_t\}_{t\geq0}\subset\cE^1_0(X,\omega)\) starting from \(0\) along which \(F\) is nonincreasing. Then \(F\{\ell\}\leq0\). Let \(\varphi=[\ell]\in\cE^1_{\mathrm{rad}}\) be its direction. By \eqref{eq:BJ-F-radial-identity}, the finiteness of \(F\{\ell\}\) forces \(\varphi\in\cE^1_{\mathrm{na}}\), and
\[
        F_{\mathrm{na}}(\varphi)
        =
        F\{\ell\}
        \leq0.
\]
Moreover, \(E_{\mathrm{na}}(\varphi)=0\) and \(d_{1,\mathrm{na}}(0,\varphi)=d_{1,\mathrm{rad}}(0,\varphi)=1\). It follows from \eqref{eq:BJ-J-d1-comparison} that \(J_{\mathrm{na}}(\varphi)\geq C^{-1}>0\), contradicting \textup{(ii)}. Therefore \textup{(i)} holds.
\end{proof}

We can introduce the concept of shifted \(\widehat K\)-stability on a polarized manifold \((X,L)\).
\begin{definition}[Shifted \(\widehat K\)-stability]
\label{def:BJ-shifted-Khat-stability}
Let \((X,L)\) be a polarized manifold such that \(\alpha=2\pi c_1(L)\) has positive average scalar curvature.
\begin{enumerate}[label=\textup{(\roman*)}]
\item We say that \((X,L)\) is \emph{shifted \(\widehat K\)-stable} if \(F_{\mathrm{na}}(\varphi)>0\) for every \(\varphi\in\cE^1_{\mathrm{na}}\) such that \(J_{\mathrm{na}}(\varphi)>0\).

\item We say that \((X,L)\) is \emph{uniformly shifted \(\widehat K\)-stable} if there exists \(\delta>0\) such that \(F_{\mathrm{na}}(\varphi)\geq \delta\, J_{\mathrm{na}}(\varphi)\) for every \(\varphi\in\cE^1_{\mathrm{na}}\).
\end{enumerate}
\end{definition}

\begin{remark}
By Proposition \ref{prop:BJ-F-compatibility}, shifted \(\widehat K\)-stability and uniform shifted \(\widehat K\)-stability are equivalent.
\end{remark}

\begin{theorem}
\label{thm:BJ-PSC-criterion}
Let \((X,L)\) be a polarized manifold such that \(\alpha=2\pi c_1(L)\) has positive average scalar curvature. Then the following are equivalent:
\begin{enumerate}[label=\textup{(\roman*)}]
\item
\(\alpha\) contains a PSC K\"ahler metric;

\item
\((X,L)\) is shifted \(\widehat K\)-stable.
\end{enumerate}
\end{theorem}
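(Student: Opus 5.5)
The theorem is essentially a concatenation of \cite[Theorem~A]{Sha2026PSCVariational} with Proposition~\ref{prop:BJ-F-compatibility}. The plan is to pass through the Archimedean coercivity statement: condition (i) of the theorem should be equivalent to \(d_1\)-coercivity of \(F=M+sJ\) on \(\cE^1_0(X,\omega)\). Proposition~\ref{prop:BJ-F-compatibility} then translates that coercivity into shifted \(\widehat K\)-stability.

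For (i)\(\Rightarrow\)(ii), we assume \(\alpha=2\pi c_1(L)\) contains a PSC K\"ahler metric. By \cite[Theorem~A]{Sha2026PSCVariational}, applied with the canonical admissible measure \(\Omega_\omega=s\omega^n\), the functional \(F\) is \(d_1\)-coercive on \(\cE^1_0(X,\omega)\). This is exactly the step already used in the first half of the proof of Theorem~\ref{thm:DZ-PSC-criterion}. The implication (i)\(\Rightarrow\)(iii) of Proposition~\ref{prop:BJ-F-compatibility} then gives \(F_{\mathrm{na}}\ge\delta J_{\mathrm{na}}\) on \(\cE^1_{\mathrm{na}}\). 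In particular \(F_{\mathrm{na}}(\varphi)>0\) whenever \(J_{\mathrm{na}}(\varphi)>0\), which is shifted \(\widehat K\)-stability.

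For (ii)\(\Rightarrow\)(i), we assume shifted \(\widehat K\)-stability. The implication (ii)\(\Rightarrow\)(i) of Proposition~\ref{prop:BJ-F-compatibility} yields \(d_1\)-coercivity of \(F\) on \(\cE^1_0(X,\omega)\). The converse direction of \cite[Theorem~A]{Sha2026PSCVariational} then produces a PSC K\"ahler metric in \(\alpha\). If that theorem is phrased through uniform geodesic stability rather than coercivity, we would instead pass through the geodesic formulation, as in the end of the proof of Theorem~\ref{thm:DZ-PSC-criterion}. Coercivity gives \(F(u_t)\ge\delta d_1(0,u_t)-C\) along any unit-speed ray in \(\cE^1_0\), hence \(F\{\ell\}\ge\delta\, v_1(\ell)\), which is uniform geodesic stability.

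The only delicate point is matching hypotheses. The main obstacle is checking that \cite[Theorem~A]{Sha2026PSCVariational} gives the equivalence between PSC existence and coercivity of this specific normalization \(F=M+sJ\). One must confirm that the choice \(\Omega=s\omega^n\) is admissible, with total mass \(\int_X S(\omega)\omega^n\) and positivity from \(s>0\). One must also confirm that the constant-shift normalization does not matter, since \(F\) is invariant under constants and Lemma~\ref{lem:d1_coer_change} handles the choice of slice. The hard analytic input, namely extracting a destabilizing ray from failure of coercivity together with the radial identity \eqref{eq:BJ-F-radial-identity}, has already been absorbed into Proposition~\ref{prop:BJ-F-compatibility}.
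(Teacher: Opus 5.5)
Your proposal is correct and is essentially the paper's own proof. The paper also uses \cite[Theorem~A]{Sha2026PSCVariational} to identify the existence of a PSC K\"ahler metric in \(\alpha\) with \(d_1\)-coercivity of \(F=M+sJ\) on \(\cE^1_0(X,\omega)\), and then concludes from Proposition~\ref{prop:BJ-F-compatibility}.
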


\begin{proof}
By \cite[Theorem A]{Sha2026PSCVariational}, the existence of a positive scalar curvature K\"ahler metric in \(\alpha\) is equivalent to the \(d_1\)-coercivity of \(F\) on \(\cE^1_0(X,\omega)\).  The result now follows from Proposition \ref{prop:BJ-F-compatibility}.
\end{proof}

Recall the non-Archimedean PSC threshold defined by
\begin{equation}
        \widehat{\sigma}(X,L)
        :=
        \inf_{\substack{
        \varphi\in\cE^1_{\mathrm{na}}\\
        J_{\mathrm{na}}(\varphi)>0}}
        \frac{F_{\mathrm{na}}(\varphi)}
             {J_{\mathrm{na}}(\varphi)}.
        \label{eq:BJ-threshold}
\end{equation}

We also have an analogue of Corollary \ref{cor:DZ-two-walls} in the polarized setting.
\begin{corollary}
\label{cor:BJ-two-walls}
Let \((X,L)\) be a polarized manifold such that \(\alpha=2\pi c_1(L)\) has positive average scalar curvature \(s>0\). Then
\begin{enumerate}[label=\textup{(\roman*)}]
\item
\(\alpha\) contains a PSC K\"ahler metric if and only if \(\widehat\sigma(X,L)>0\);

\item
\(\alpha\) contains a unique cscK metric if and only if \( \widehat\sigma(X,L)>s\).
\end{enumerate}
\end{corollary}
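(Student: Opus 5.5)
Part (i) follows directly from Theorem~\ref{thm:BJ-PSC-criterion} and Proposition~\ref{prop:BJ-F-compatibility}. If \(\alpha\) contains a PSC metric, then \((X,L)\) is shifted \(\widehat K\)-stable. By the equivalence (ii)\(\Leftrightarrow\)(iii) in Proposition~\ref{prop:BJ-F-compatibility}, there is \(\delta>0\) with \(F_{\mathrm{na}}\geq\delta J_{\mathrm{na}}\) on \(\cE^1_{\mathrm{na}}\), so \(\widehat\sigma(X,L)\geq\delta>0\). Conversely, if \(\widehat\sigma(X,L)>0\), then \(F_{\mathrm{na}}(\varphi)\geq\widehat\sigma J_{\mathrm{na}}(\varphi)>0\) whenever \(J_{\mathrm{na}}(\varphi)>0\). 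This is shifted \(\widehat K\)-stability, and Theorem~\ref{thm:BJ-PSC-criterion} gives a PSC metric. One small point: \(\widehat\sigma\) is an infimum, so \(\widehat\sigma>0\) gives the uniform bound with \(\delta=\widehat\sigma\), and positivity needs no further compactness argument.

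For (ii), I would apply the ``moreover'' part of Proposition~\ref{prop:DZ-shifted-quantization} and rerun the argument of Proposition~\ref{prop:BJ-F-compatibility} with \(s\) replaced by \(s-t\). We have \(\widehat\sigma>s\) if and only if there is \(\delta>0\) with \(M_{\mathrm{na}}=F_{\mathrm{na}}-sJ_{\mathrm{na}}\geq\delta J_{\mathrm{na}}\) on \(\cE^1_{\mathrm{na}}\). This is uniform \(\widehat K\)-stability in the sense of Boucksom--Jonsson, and I would relate it to \(d_1\)-coercivity of \(M\) on \(\cE^1_0(X,\omega)\) as follows.

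\emph{Coercivity implies the bound.} Restrict \(M\) to the maximal rays \(\ell_\varphi\), which lie in \(\cE^1_0\) after the normalization \(E_{\mathrm{na}}(\varphi)=0\). Use \(M\{\ell_\varphi\}=M_{\mathrm{na}}(\varphi)\), which is \cite[Theorem~C]{BoucksomJonsson2025YTD}, i.e.\ \eqref{eq:BJ-F-radial-identity} minus the \(sJ\) term. Then apply \eqref{eq:BJ-J-d1-comparison} exactly as in the proof of Proposition~\ref{prop:BJ-F-compatibility}.

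\emph{The bound implies coercivity.} Argue by contradiction. If \(M\) is not coercive, \cite[Corollary~2.28]{BoucksomJonsson2025YTD} produces a unit-speed ray in \(\cE^1_0\) along which \(M\) is nonincreasing. This needs \(M\) to be strongly lower semicontinuous, which holds by the compactness theorem for sublevel sets of \(M\) (entropy compactness; cf.\ Berman--Darvas--Lu). The direction \(\varphi\) of this ray then satisfies \(M_{\mathrm{na}}(\varphi)\leq0\) while \(J_{\mathrm{na}}(\varphi)>0\), contradicting the assumed bound.

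Finally, by the Darvas--Rubinstein / Berman--Darvas--Lu / Chen--Cheng theorem quoted in the introduction, \(d_1\)-coercivity of \(M\) on \(\cE^1_0\) with trivial group is equivalent to the existence of a unique cscK metric. A shortcut would be to combine Corollary~\ref{cor:DZ-two-walls}(ii) with Theorem~\ref{thm:main-BJ}, but that theorem is proved later, so I would give the direct argument above.

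The main obstacle is the step that passes from \(\widehat\sigma>s\) back to coercivity. It requires checking strong lower semicontinuity of \(M\) in the sense of \cite[Definition~2.20]{BoucksomJonsson2025YTD}, and it requires the radial identity for \(M\) alone, including the value \(+\infty\) off \(\cE^1_{\mathrm{na}}\), so that the destabilizing ray has a direction in \(\cE^1_{\mathrm{na}}\). Both are already contained in \cite{BoucksomJonsson2025YTD}. In fact \cite[Theorem~A]{BoucksomJonsson2025YTD} with trivial automorphisms, which says uniform \(\widehat K\)-stability is equivalent to a unique cscK metric, can be cited directly. I would fall back on that if the uniqueness bookkeeping becomes delicate. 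Uniqueness holds there because coercivity modulo constants rules out nontrivial \(\Aut^0\).
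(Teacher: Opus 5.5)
Your proposal is correct. Part (i) matches the paper: the paper simply invokes Theorem~\ref{thm:BJ-PSC-criterion}, and Proposition~\ref{prop:BJ-F-compatibility} supplies the uniformity exactly as you describe.

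For (ii) you take a genuinely different route. Like the paper, you begin from the observation that $\widehat\sigma>s$ iff $M_{\mathrm{na}}\geq\delta J_{\mathrm{na}}$ on $\cE^1_{\mathrm{na}}$. From there you move to the Archimedean side. You rerun the proof of Proposition~\ref{prop:BJ-F-compatibility} for $M=F-sJ$, using:
\begin{itemize}
\item Boucksom--Jonsson's Theorem~C for the radial identity;
\item their Corollary~2.28 for ray extraction;
\item Berman--Darvas--Lu compactness for strong lower semicontinuity.
\end{itemize}
This identifies uniform $\widehat K$-stability with $d_1$-coercivity of $M$ on $\cE^1_0(X,\omega)$, and you then conclude with the Chen--Cheng/Berman--Darvas--Lu properness theorem.

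The paper stays non-Archimedean throughout. From $M_{\mathrm{na}}\geq\delta J_{\mathrm{na}}$ it reads off $\widehat K$-polystability and gets existence from Boucksom--Jonsson's Theorem~A. It then proves uniqueness separately. Their Corollary~8.8 gives $M_{\mathrm{na}}\equiv0$ on $\mathcal P_{\mathbb R}$, which forces every real product direction to be constant. Hence the reductive group $\Aut^0(X,L)$ contains no torus and is trivial, and Berman--Berndtsson gives uniqueness. For the converse, the paper uses the coercivity half of Theorem~A.

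Your route is uniform with the rest of Section~4, and it makes uniqueness automatic: coercivity modulo constants, rather than modulo $\Aut^0$, already encodes it, so you need no product-direction or torus argument. The paper's route is shorter given the Boucksom--Jonsson machinery, which already packages the ingredients you unpack. It also ties directly into the product-direction discussion of the critical value $\sigma=s$.

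Two minor remarks. First, the ``moreover'' part of Proposition~\ref{prop:DZ-shifted-quantization} plays no role in your argument and can be dropped. Second, in ``coercivity modulo constants rules out nontrivial $\Aut^0$'', read $\Aut^0$ as $\Aut^0(X,L)$ (or the reduced automorphism group). Parallel vector fields, for example from an elliptic-curve factor, act trivially on potentials and are not excluded.

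The clean bookkeeping for your last step runs in two directions:
\begin{itemize}
\item If the cscK metric $\omega_0$ is unique, then $\Aut^0(X)$ fixes it. Taking $\omega_0$ as reference, the group acts by $d_1$-isometries fixing $0$, so $\Aut^0$-coercivity reduces to coercivity modulo constants.
\item Conversely, two distinct cscK metrics would give, via Matsushima and Berman--Berndtsson, a geodesic line of cscK potentials. Along this line $M$ is constant while $d_1$ grows linearly, which contradicts coercivity.
\end{itemize}
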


\begin{proof}
Assertion~\textup{(i)} follows from Theorem \ref{thm:BJ-PSC-criterion}. For assertion~\textup{(ii)}, it follows from \eqref{eq:BJ-NA-F-definition} that
\[
        \widehat\sigma(X,L)-s
        =
        \inf_{\substack{
        \varphi\in\mathcal E^1_{\mathrm{na}}\\
        J_{\mathrm{na}}(\varphi)>0}}
        \frac{M_{\mathrm{na}}(\varphi)}
             {J_{\mathrm{na}}(\varphi)}.
\]
Hence \(\widehat\sigma(X,L)>s\) if and only if there exists \(\delta>0\) such that
\(M_{\mathrm{na}}(\varphi)\geq\delta J_{\mathrm{na}}(\varphi)\) for every \(\varphi\in\mathcal E^1_{\mathrm{na}}\). Since \(J_{\mathrm{na}}\geq0\), with equality precisely on the constant directions, this implies that \((X,L)\) is \(\widehat K\)-polystable. The latter is equivalent to the existence of a cscK metric in \(\alpha\) thanks to \cite[Theorem A]{BoucksomJonsson2025YTD}. Moreover, \cite[Corollary~8.8]{BoucksomJonsson2025YTD} gives \(M_{\mathrm{na}}\equiv0\) on \(\mathcal P_{\mathbb R}\). Then \(J_{\mathrm{na}}=0\) on \(\mathcal P_{\mathbb R}\), and hence that all real product directions are constant. By the Matsushima--Lichnerowicz theorem \cite{Matsushima1957,Lichnerowicz1958}, \(\Aut^0(X,L)\) is reductive.  Since every positive-dimensional connected reductive algebraic group contains a nontrivial algebraic
torus, the absence of nonconstant real product directions forces \(\Aut^0(X,L)\) to be trivial.  The uniqueness of cscK metrics modulo \(\Aut^0(X,L)\) therefore implies that the cscK metric is unique (see \cite[Theorem~1.3]{BermanBerndtsson2017}).

Conversely, suppose that \(\alpha\) contains a unique cscK metric. Then the real product directions are precisely the constant directions. By \cite[Theorem~A]{BoucksomJonsson2025YTD}, \(M_{\mathrm{na}}(\varphi)\geq \delta d_{1,\mathrm{na}}(\varphi,\mathbb R)\) for some \(\delta>0\) and every \(\varphi\in\mathcal E^1_{\mathrm{na}}(L)\). It follows from \eqref{eq:BJ-J-d1-comparison} that \(M_{\mathrm{na}}(\varphi)\geq\delta' J_{\mathrm{na}}(\varphi)\) for some \(\delta'\).  Thus \(\widehat\sigma(X,L)>s\).
\end{proof}

\subsection{Comparison of two thresholds on polarized manifolds}
\label{subsec:threshold-comparison}

The goal of this subsection is to compare the two thresholds \(\sigma\) and \(\widehat \sigma\) through their strict lower levels. By \eqref{eq:BJ-F-radial-identity}, the threshold \(\widehat{\sigma}\) admits the equivalent radial description
\begin{equation}
        \widehat\sigma(X,L)
        =
        \inf_{\substack{
        [\ell]\in\cE^1_{\mathrm{rad}}}\\J\{\ell\}>0}
        \frac{F\{\ell\}}{J\{\ell\}}.
        \label{eq:BJ-threshold-radial}
\end{equation}
Indeed, if \(\varphi=[\ell]\in\cE^1_{\mathrm{na}}\), then
\[
        F\{\ell\}=F_{\mathrm{na}}(\varphi),
        \qquad
        J\{\ell\}=J_{\mathrm{na}}(\varphi),
\]
whereas \(F\{\ell\}=+\infty\) whenever \(\varphi\notin\cE^1_{\mathrm{na}}\).  Thus directions outside \(\cE^1_{\mathrm{na}}\) do not affect the infimum.

\begin{proposition}
\label{prop:strict-level-comparison}
For every \(q\in\R\), the following are equivalent:
\begin{enumerate}[label=\textup{(\roman*)}]
\item \(\sigma(X,\alpha)>q\);

\item \(\widehat\sigma(X,L)>q\);

\item there exist \(\varepsilon>0\) and \(C\in\R\) such that for every \(u\in\cE^1_0(X,\omega)\)
\begin{equation}
        F(u)\geq(q+\varepsilon)J(u)-C.
        \label{eq:strict-level-Archimedean}
\end{equation}

\end{enumerate}
\end{proposition}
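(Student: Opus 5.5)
The plan is to prove the cycle (iii)\(\Rightarrow\)(i)\(\Rightarrow\)(ii)\(\Rightarrow\)(iii). In each step the shifted functional \(F-tJ=M+(s-t)J\) plays the role that \(F\) played in \S\ref{sec:darvas-zhang} and in Proposition~\ref{prop:BJ-F-compatibility}. Set \(t:=q+\varepsilon\) and \(G:=F-tJ\). Condition (iii) then says that \(G\geq-C\) on \(\cE^1_0(X,\omega)\). Condition (iii) with \(\varepsilon\) replaced by \(\varepsilon/2\) says that \(F-(q+\varepsilon/2)J\) is \(J\)-coercive. By Lemma~\ref{lem:d1_coer_change} and \eqref{eq:energy-slice-d1-J}, this is equivalent to \(d_1\)-coercivity of \(F-(q+\varepsilon/2)J\) on \(\cE^1_0(X,\omega)\). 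So (iii) is equivalent to the following: there is \(q'>q\) such that \(F-q'J\) is \(d_1\)-coercive on \(\cE^1_0\).

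\emph{(iii)\(\Rightarrow\)(i).} Choose \(q'>q\) as above. By the final assertion of Proposition~\ref{prop:DZ-shifted-quantization} with \(t=q'\), the functional \(F^\beta-q'J\) is \(d_1\)-coercive on \(\cE^1_0\) for all large \(\beta\). Repeat the first half of the proof of Theorem~\ref{thm:DZ-PSC-criterion}: pass to the \(\sup\)-normalized slice, evaluate along the ray \(\ell^{\mathcal T}\) of any K\"ahler test configuration, divide by \(t\), and use \eqref{eq:DZ-radial-J-speed}. This gives \(F^\beta(\mathcal T)-q'J(\mathcal T)\geq\varepsilon' J(\mathcal T)\), so \(\sigma\geq q'+\varepsilon'>q\).

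\emph{(i)\(\Rightarrow\)(ii).} If \(\sigma>q\), there are \(\beta\) and \(q'>q\) with \(F^\beta(\mathcal T)\geq q'J(\mathcal T)\) for all \(\mathcal T\). Take any finite-energy ray \(\ell\) with \(F\{\ell\}<\infty\). Then \(M\{\ell\}<\infty\), so by the Mesquita-Piccione/Darvas-Zhang approximation used in Theorem~\ref{thm:DZ-PSC-criterion}, together with \(d_1^c\)-continuity of \(M^\beta\{\cdot\}\) and \(J\{\cdot\}\), we get \(F^\beta\{\ell\}\geq q'J\{\ell\}\). Since \(F^\beta\leq F\), this yields \(F\{\ell\}\geq q'J\{\ell\}\). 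By the radial description \eqref{eq:BJ-threshold-radial}, \(\widehat\sigma\geq q'>q\).

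\emph{(ii)\(\Rightarrow\)(iii).} This is the main step and imitates the last part of Proposition~\ref{prop:BJ-F-compatibility}. Pick \(q<q''<q'<\widehat\sigma\) and let \(G:=F-q''J\). The functional \(G\) is geodesically convex and \(d_1\)-lsc on \(\cE^1_0\), because \(J\) is \(d_1\)-continuous and convex along geodesics in the energy-normalized slice: there \(J=\frac1V\int u\,\omega^n\), which is convex along geodesics. The obstacle is strong lower semicontinuity, that is, compactness of \(G\)-sublevel sets intersected with \(d_1\)-balls. On a \(d_1\)-ball, \(J\) is bounded, so such a set lies inside an \(F\)-sublevel set and is relatively compact by \cite[Proposition~4.1]{Sha2026PSCVariational}. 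It is closed by lower semicontinuity, hence compact.

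Suppose \(G\) were not \(d_1\)-coercive. Then \cite[Corollary~2.28]{BoucksomJonsson2025YTD} produces a unit-speed ray \(\ell\subset\cE^1_0\) from \(0\) with \(G\{\ell\}\leq0\). Hence \(F\{\ell\}\leq q''J\{\ell\}<\infty\). By \eqref{eq:BJ-F-radial-identity}, its direction \(\varphi\) lies in \(\cE^1_{\mathrm{na}}\), and by \eqref{eq:BJ-J-d1-comparison}, \(J_{\mathrm{na}}(\varphi)\geq C^{-1}>0\). Then \(F_{\mathrm{na}}(\varphi)\leq q''J_{\mathrm{na}}(\varphi)<q'J_{\mathrm{na}}(\varphi)\), which contradicts \(\widehat\sigma>q'\). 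Therefore \(G\) is \(d_1\)-coercive, and hence \(J\)-coercive on \(\cE^1_0\) by Lemma~\ref{lem:d1_coer_change}: \(G\geq\delta J-C\). This gives (iii) with \(\varepsilon=q''-q+\delta\).

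One point needs care when \(q''<0\): lower semicontinuity and convexity of \(-q''J\) must not break the compactness argument. I expect this to be harmless, since \(J\) is continuous and bounded on \(d_1\)-balls.
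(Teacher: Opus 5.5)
Your steps (iii)$\Rightarrow$(i) and (i)$\Rightarrow$(ii) are sound. The latter differs from the paper in approximating arbitrary finite-slope rays via Mesquita--Piccione, whereas the paper only uses density of $\cH_{\mathrm{na}}$ in $\cE^1_{\mathrm{na}}$ and the fact that ample test configurations are K\"ahler test configurations. Both work.

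The gap is in (ii)$\Rightarrow$(iii). The ray extraction in \cite[Corollary~2.28]{BoucksomJonsson2025YTD} needs the functional to be geodesically convex: it bounds the functional along the segment from $0$ to a bad point $u_j$ by linear interpolation of the endpoint values. Your justification of the convexity of $G=F-q''J$ has the sign backwards, because convexity of $J$ along geodesics makes $-q''J$ \emph{concave} when $q''>0$. Write $G=M+(s-q'')J$. For $q''\le s$ this is convex, so the case $q''<0$ that worried you is harmless. For $q''>s$, however, $G=M-(q''-s)J$ is a convex functional minus a convex one, and nothing makes it convex along geodesics. Since $q''>q$, the argument breaks for every $q\ge s$. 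That is exactly the range containing the cscK wall, and the range needed to conclude $\sigma=\widehat\sigma$ when the thresholds exceed $s$.

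The paper avoids this by comparing $F$ with a multiple of $d_1(0,\cdot)$ rather than of $J$, on the supremum-normalized slice. For $u\in\cE^1_{\sup}(X,\omega)$ one has $u\le 0$, hence $d_1(0,u)=-E(u)$, which is affine along geodesics. Moreover, the geodesic segment from $0$ to such a $u$ stays in $\cE^1_{\sup}(X,\omega)$. Thus $F-\lambda d_1(0,\cdot)=F+\lambda E$ is convex along these segments for \emph{every} $\lambda\in\R$. Take $\lambda=q+\eta$ and suppose $F\ge(q+\eta)d_1(0,\cdot)-C_1$ fails. The ray extraction of \cite[Proposition~4.2]{Sha2026PSCVariational} then produces a unit-speed ray in $\cE^1_{\sup}(X,\omega)$ with $F\{\ell\}\le q+\eta$. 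Because $J\le d_1(0,\cdot)\le J+C_0$ on this slice, the ray has $J\{\ell\}=1$, which contradicts $\widehat\sigma>q+2\eta$ through the radial description. The same two-sided bound then converts the $d_1$-estimate into $F\ge(q+\eta)J-C$ with no loss in the constant. This trade is not available on $\cE^1_0(X,\omega)$: there \eqref{eq:energy-slice-d1-J} compares $J$ and $d_1$ only up to multiplicative constants, which would destroy the sharp threshold.
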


\begin{proof}
We prove \(\textup{(i)}\implies \textup{(ii)}\implies\textup{(iii)}\implies\textup{(i)}\).
Assume first that \textup{(i)} holds. By the definition of \(\sigma(X,\alpha)\), there exist \(\beta>0\) and \(\eta>0\) such that for every K\"ahler test configuration \(\mathcal T\) with \(J(\mathcal T)>0\),
\begin{equation}
        F^\beta(\mathcal T)
        \geq
        (q+2\eta)J(\mathcal T)
        \label{eq:fixed-beta-strict-level}
\end{equation}
Let \(\varphi\in\cE^1_{\mathrm{na}}\) satisfy \(J_{\mathrm{na}}(\varphi)>0\), and let \(\ell_\varphi\) be its maximal geodesic ray. Choose \(\varphi_j\in\cH_{\mathrm{na}}\) such that \(d_{1,\mathrm{na}}(\varphi_j,\varphi)\rightarrow0\). Let \(\ell_j\) be the maximal geodesic ray of \(\varphi_j\). By \eqref{eq:BJ-NA-radial-embedding}, \(d_1^c(\ell_j,\ell_\varphi)\rightarrow0\). Each \(\varphi_j\) is represented by an ample test configuration \(\mathcal T_j\) for \((X,L)\), and hence by a K\"ahler test
configuration for \((X,\alpha)\). Since
\[
        J(\mathcal T_j)
        =
        J_{\mathrm{na}}(\varphi_j)
        \longrightarrow
        J_{\mathrm{na}}(\varphi)>0,
\]
equation \eqref{eq:fixed-beta-strict-level} applies to \(\mathcal T_j\) for all sufficiently large \(j\). The \(d_1^c\)-continuity of \(F^\beta\{\cdot\}\) and \(J\{\cdot\}\) therefore gives
\[
        F^\beta\{\ell_\varphi\}
        \geq
        (q+2\eta)J\{\ell_\varphi\}.
\]
Since \(F^\beta\leq F\), while \eqref{eq:BJ-F-radial-identity} gives \(F\{\ell_\varphi\}=F_{\mathrm{na}}(\varphi)\) and \(J\{\ell_\varphi\}=J_{\mathrm{na}}(\varphi)\), we obtain
\[
        F_{\mathrm{na}}(\varphi)
        \geq
        (q+2\eta)J_{\mathrm{na}}(\varphi).
\]
Taking the infimum over all such \(\varphi\) yields
\[
        \widehat\sigma(X,L)
        \geq q+2\eta>q.
\]
Thus \textup{(ii)} holds.

Next, assume \textup{(ii)}. Choose \(\eta>0\) such that
\[
        \widehat\sigma(X,L)>q+2\eta.
\]
By \eqref{eq:BJ-threshold-radial}, for every \([\ell]\in\cE^1_{\mathrm{rad}}\) with \(J\{\ell\}>0\),
\begin{equation}
            F\{\ell\}
        \geq
        (q+2\eta)J\{\ell\}.
        \label{eq:radial-ineq}
\end{equation}
We claim that there exists \(C_1\in\R\) such that
\begin{equation}
        F(u)
        \geq
        (q+\eta)d_1(0,u)-C_1,
        \qquad
        u\in\cE^1_{\sup}(X,\omega).
        \label{eq:strict-level-d1-lower-bound}
\end{equation}
Suppose otherwise. We apply the ray-extraction argument of \cite[Proposition~4.2]{Sha2026PSCVariational}, with \(\sigma=q+\eta\), on the supremum-normalized slice. We briefly explain the normalization adjustment.

If \(u\in\cE^1_{\sup}(X,\omega)\), then \(u\leq0\), and hence \(d_1(0,u)=-E(u)\).
Its \(\cE^1_0\)-representative \(\widetilde u:=u-E(u)\) satisfies
\[
        E(\widetilde u)=0,
        \qquad
        F(\widetilde u)=F(u),
        \qquad
        d_1(0,\widetilde u)
        \leq
        2d_1(0,u).
\]
Consequently, \cite[Proposition~4.1]{Sha2026PSCVariational} remains valid on \(\cE^1_{\sup}(X,\omega)\). Moreover, if \(u_{j,t}\) is the finite-energy geodesic segment joining \(0\) to an endpoint \(u_j\) with \(\sup_Xu_j=0\), then \(\sup_Xu_{j,t}=0\) for every \(t\). Indeed, this follows from the invariance of the upper endpoint slope along a weak geodesic established in \cite[Theorem~3.4]{Darvas2017WeakGeodesics}. Thus the proof of \cite[Proposition~4.2]{Sha2026PSCVariational} produces a unit-speed finite-energy geodesic ray
\(\ell=\{u_t\}_{t\geq0}\subset\cE^1_{\sup}(X,\omega)\) such that
\[
        d_1(0,u_t)=t,
        \qquad
        F\{\ell\}\leq q+\eta.
\]
Let \(C_0\) be the constant in \eqref{eq:sup-slice-d1-J}. Then
\[
        t-C_0
        \leq
        J(u_t)
        \leq
        t.
\]
Dividing by \(t\) and letting \(t\to\infty\), we obtain \(J\{\ell\}=1\). Since \(\ell\) is nonconstant, \eqref{eq:radial-ineq} gives
\[
        F\{\ell\}
        \geq
        (q+2\eta)J\{\ell\}
        =
        q+2\eta,
\]
contradicting \(F\{\ell\}\leq q+\eta\). This proves \eqref{eq:strict-level-d1-lower-bound}.

For \(u\in\cE^1_{\sup}(X,\omega)\), \eqref{eq:sup-slice-d1-J} gives
\[
        (q+\eta)d_1(0,u)
        \geq
        (q+\eta)J(u)-|q+\eta|C_0.
\]
Combining it with \eqref{eq:strict-level-d1-lower-bound} and changing the additive constant gives
\[
        F(u)\geq(q+\eta)J(u)-C.
\]
Since \(F\) and \(J\) are invariant under addition of constants, the same inequality holds on \(\cE^1_0(X,\omega)\). Hence \textup{(iii)} holds, with \(\varepsilon=\eta\).

Finally, assume \textup{(iii)}, and set \(t:=q+\frac{\varepsilon}{2}\). Then \(F-tJ\) is \(d_1\)-coercive on \(\cE^1_0(X,\omega)\). By Proposition \ref{prop:DZ-shifted-quantization}, \(F^\beta-tJ\) is \(d_1\)-coercive for every sufficiently large \(\beta>0\). Fix one such \(\beta\). Equivalently, there exist \(\delta>0\) and \(C_\beta\in\R\) such that for every \(u\in\cE^1_0(X,\omega)\)
\[
        F^\beta(u)-tJ(u)
        \geq
        \delta J(u)-C_\beta,
\]
which is exactly
\[
        F^\beta(\mathcal T)
        \geq
        (t+\delta)J(\mathcal T)
\]
for every K\"ahler test configuration \(\mathcal T\). Consequently,
\[
        \sigma(X,\alpha)
        \geq t+\delta
        >q,
\]
which proves \textup{(i)}.
\end{proof}

We can now prove Theorem \ref{thm:main-BJ}.
\begin{proof}[Proof of Theorem \ref{thm:main-BJ}]
By Proposition \ref{prop:strict-level-comparison}, for every \(q\in\R\), 
\[ 
q<\sigma\bigl(X,2\pi c_1(L)\bigr) \quad\Longleftrightarrow\quad q<\widehat\sigma(X,L). 
\] 
Thus the two extended real numbers have the same strict lower levels. Hence \(\sigma\bigl(X,2\pi c_1(L)\bigr) = \widehat\sigma(X,L)\).
\end{proof}

Consequently, we obtain the following result.
\begin{corollary}
\label{cor:two-walls}
Let \((X,L)\) be a polarized manifold so that \(2\pi c_1(L)\) has positive average scalar curvature \(s>0\). Set \(\sigma:=\sigma\bigl(X,2\pi c_1(L)\bigr)=\widehat\sigma(X,L)\). Then:
\begin{enumerate}[label=\textup{(\roman*)}]
\item
The class \(2\pi c_1(L)\) contains a PSC K\"ahler metric if and only if \(\sigma>0\).

\item
The class \(2\pi c_1(L)\) contains a unique cscK metric if and only if \(\sigma > s\).
\end{enumerate}
\end{corollary}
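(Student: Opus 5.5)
This corollary is a direct assembly of results already proved, so the plan is to reduce each assertion to an earlier statement about one of the two thresholds. We use Theorem \ref{thm:main-BJ} to identify them. First we check that \(\alpha=2\pi c_1(L)\) is a real K\"ahler class with average scalar curvature \(s>0\). Hence both Corollary \ref{cor:DZ-two-walls} (transcendental setting) and Corollary \ref{cor:BJ-two-walls} (polarized setting) apply to it. Next, Theorem \ref{thm:main-BJ} gives \(\sigma(X,2\pi c_1(L))=\widehat\sigma(X,L)\), so the common value \(\sigma\) is well defined as an extended real number.

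For assertion (i), we may use either route. Corollary \ref{cor:DZ-two-walls}(i) gives PSC existence if and only if \(\sigma(X,\alpha)>0\). Equivalently, Corollary \ref{cor:BJ-two-walls}(i) gives PSC existence if and only if \(\widehat\sigma(X,L)>0\). Either yields the claim once \(\sigma\) is substituted. For a self-contained route we can also cite Proposition \ref{prop:strict-level-comparison} with \(q=0\). This identifies \(\sigma>0\) with the bound \(F\geq\varepsilon J-C\) on \(\cE^1_0(X,\omega)\), i.e.\ \(d_1\)-coercivity of \(F\) by \eqref{eq:energy-slice-d1-J} and Lemma \ref{lem:d1_coer_change}. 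By \cite[Theorem A]{Sha2026PSCVariational}, this coercivity is equivalent to PSC existence.

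For assertion (ii), we argue the same way. Corollary \ref{cor:DZ-two-walls}(ii) gives unique cscK existence if and only if \(\sigma(X,\alpha)>s\). This rests on \cite[Theorem~5.5]{DarvasZhang2025}. Alternatively, Corollary \ref{cor:BJ-two-walls}(ii) gives the same criterion in terms of \(\widehat\sigma\), resting on \cite[Theorem A]{BoucksomJonsson2025YTD}. With \(q=s\) in Proposition \ref{prop:strict-level-comparison}, the two conditions are literally equivalent, which is a consistency check that the two independent YTD inputs agree.

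No step presents a real obstacle. The only point requiring care is that \(\sigma\) may be \(\pm\infty\). The argument therefore phrases everything through strict lower levels \(q<\sigma\), exactly as in Proposition \ref{prop:strict-level-comparison}, rather than manipulating \(\sigma\) as a finite number.
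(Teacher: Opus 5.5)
Your proposal is correct and takes essentially the paper's approach. The paper states this corollary without a separate proof: it identifies \(\sigma(X,2\pi c_1(L))=\widehat\sigma(X,L)\) via Theorem~\ref{thm:main-BJ} and reads off (i) and (ii) from Corollary~\ref{cor:DZ-two-walls} and Corollary~\ref{cor:BJ-two-walls}, exactly as you do. Your additional route for (i), through Proposition~\ref{prop:strict-level-comparison} with \(q=0\), is also valid but not needed.
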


Thus the common threshold separates the following regimes:
\[
\begin{array}{ccl}
        \sigma\leq0
        &:&
        \text{no PSC K\"ahler metric},\\[1mm]
        0<\sigma<s
        &:&
        \text{PSC K\"ahler metrics, but no cscK metric},\\[1mm]
        \sigma=s
        &:&
        \text{PSC K\"ahler metrics; while cscK existence is not}\\
        &&\text{determined by the threshold alone},\\[1mm]
        \sigma>s
        &:&
        \text{a unique cscK metric}.
\end{array}
\]
Since \(s>0\), the cscK metric in the last regime has positive scalar curvature. At the critical level \(\sigma=s\), the existence of a cscK metric is determined by the full \(\widehat K\)-polystability condition rather than by the numerical threshold.

\bibliographystyle{alpha}
\bibliography{shifted_psc_stability_v2}

\bigskip
  \footnotesize

  Zehao Sha, \textsc{Institute for Mathematics and Fundamental Physics, Hefei, China}\par\nopagebreak
  Email address: \texttt{zhsha@imfp.org.cn}\par\nopagebreak
  Homepage: \url{https://ricciflow19.github.io/}

\end{document}